\documentclass{article}
\usepackage{amsmath,amsthm,amssymb,amsfonts}
\usepackage{lineno,graphicx,enumitem,color}
\usepackage[authoryear,round,comma]{natbib}

\newtheorem{theorem}{Theorem}[section]

\newtheorem{lemma}[theorem]{Lemma}

\title{A PDE-ODE Coupled Spatio-Temporal \\ Mathematical Model for Fire Blight During Bloom\\}
\author{Michael Pupulin$^1$, 
Xiang-Sheng Wang$^2$, \\
 Messoud A Efendiev$^3$, Thomas Giletti$^4$, Hermann J. Eberl$^1$ \\ \\
$^1$ Dept. Mathemtics and Statistics, University of Guelph, \\Guelph ON, Canada \\ \\ 
$^2$ Dept. Mathematics, University of Louisiana at Lafayette,\\ Lafayette, LA, USA\\ \\
$^3$ Dept. Mathematics, Marmara University, Istanbul, Turkey\\
and   Inst. Comp. Biology, Helmholtz Center Munich, Germany\\ \\
$^5$ Laboratoire de Math\'{e}matiques Blaise Pascal,\\ University Clermont-Auvergne, France }
\begin{document}
\maketitle

\begin{abstract}
 Fire blight is a bacterial plant disease that affects apple and pear trees.
 We present a mathematical model for its spread in an orchard during bloom. This is a PDE-ODE coupled system, consisting of two semilinear PDEs for the pathogen, coupled to a system of three ODEs for the stationary hosts. Exploratory numerical simulations suggest the existence of travelling waves, which we subsequently prove, under some conditions on parameters, using the method of upper and lower bounds and Schauder's fix point theorem. Our results are likely not optimal in the sense that our constraints on parameters, which can be interpreted biologically,  are sufficient for the existence of travelling waves, but probably not necessary.  Possible implications for fire blight biology and management are discussed.

{\bf keywords:} {\it Erwinia amylovora}; Fire blight; Mathematical Model; PDE-ODE coupled system; Travelling Waves;

{\bf MSC2020.}    35C07,92D30
\end{abstract}

\section{Introduction}

Fire blight is a bacterial plant disease. It is caused by \textit{Erwinia amylovora} and infects members of the Rosaceae family such as apple and pear trees. One characteristic symptom of fire blight is that it causes infected tissue to appear as though it had been scorched by fire, from which the disease's common name derives. The disease is persistent, incurable and highly destructive, capable of killing young apple and pear trees within a single growing season. Recent fire blight epidemics are estimated to have caused financial losses on the scale of tens of millions of dollars \citep{van2012losses}.

\subsection{Historical overview} 
The scientific pursuit of discovering the causal agent of fire blight began well before it was understood that bacteria could in fact cause diseases. The first known report of fire blight occurred over 200 years ago, when William Denning described the ``disorder" of apple and pear trees near the Hudson River Valley, New York \citep{denning1794decay}. After finding two worms in an infected host, he was the first to hypothesize the ``Insect Theory" -- that the disease was induced by insects. This theory remained as the most popular explanation for almost one hundred years, however, not everyone accepted it as satisfactory, as infected trees were observed to secrete some sort of ooze, see Figure \ref{oozepic}, and it was unclear how an insect could cause such a symptom. 

In 1878, almost one hundred years after Denning first writes about fire blight, Professor Thomas Burrill of the University of Illinois identifies the existence of bacteria within fire-blight-diseased tissue. He shows, for the first time, that bacteria could cause diseases in plants, correctly identifying the causal agent of fire blight to be \textit{Erwinia amylovora}, though it was known to him at the time as \textit{Micrococcus amylovorus}. In 1880, Burrill presents his work on fire blight, referring to it as ``The Anthrax of fruit trees" \citep{burrill1880anthrax}, publishing the first pictures of the bacterium in 1881.

Starting in the second half of the twentieth century, quantitative models of fire blight infection began to be developed. The first was the New York system in 1955, in which the correlation between temperature above a certain threshold and fire blight infections was studied by Mills \citep{mills1955fire}. He found a significant positive correlation between his temperature-based model, together with precipitation during bloom, and fire blight infections in Lake Ontario County from 1918 to 1954 \citep{van2012losses}. A good number of fire blight prediction models stemmed from this idea throughout the 1960s, 1970s, 1980s and 1990s. Perhaps most notably were the Billing systems \citep{billing1979warning}, the Cougar blight model \citep{smith1992predictive} and  the Maryblyt model \citep{steiner1989predicting}. Although these models have undoubtedly helped growers limit their losses due to fire blight, epidemics continue to occur, and the effect of the most valuable control agent against fire blight, the antibiotic streptomycin, continues to decline in the face of the  rise of antimicrobial resistance \citep{jones2000development}. 

Today, fire blight epidemics remain as a major threat to apple and pear growing operations. The disease can now be found across North America, Europe, the Mediterranean region and the Pacific Rim. For a full list of countries in which fire blight has been reported, see \citet{van2012losses}. The past few decades have seen increasingly more severe incidences of fire blight in the major pome-fruit producing states of Washington and Michigan, causing losses on the scale of tens of millions of dollars in the late 1990s \citep{van2012losses}. Still, there exists no cure for fire blight. Surprisingly, it is the suggestion by Lowell in 1826, that the most satisfactory control measure is the removal of limbs thirty centimeters below the visibly infected tissue, that remains the most effective strategy in the control of fire blight apart from the application of antibiotics during bloom.

\begin{figure}
    \centering
    \includegraphics[width=0.6\textwidth]{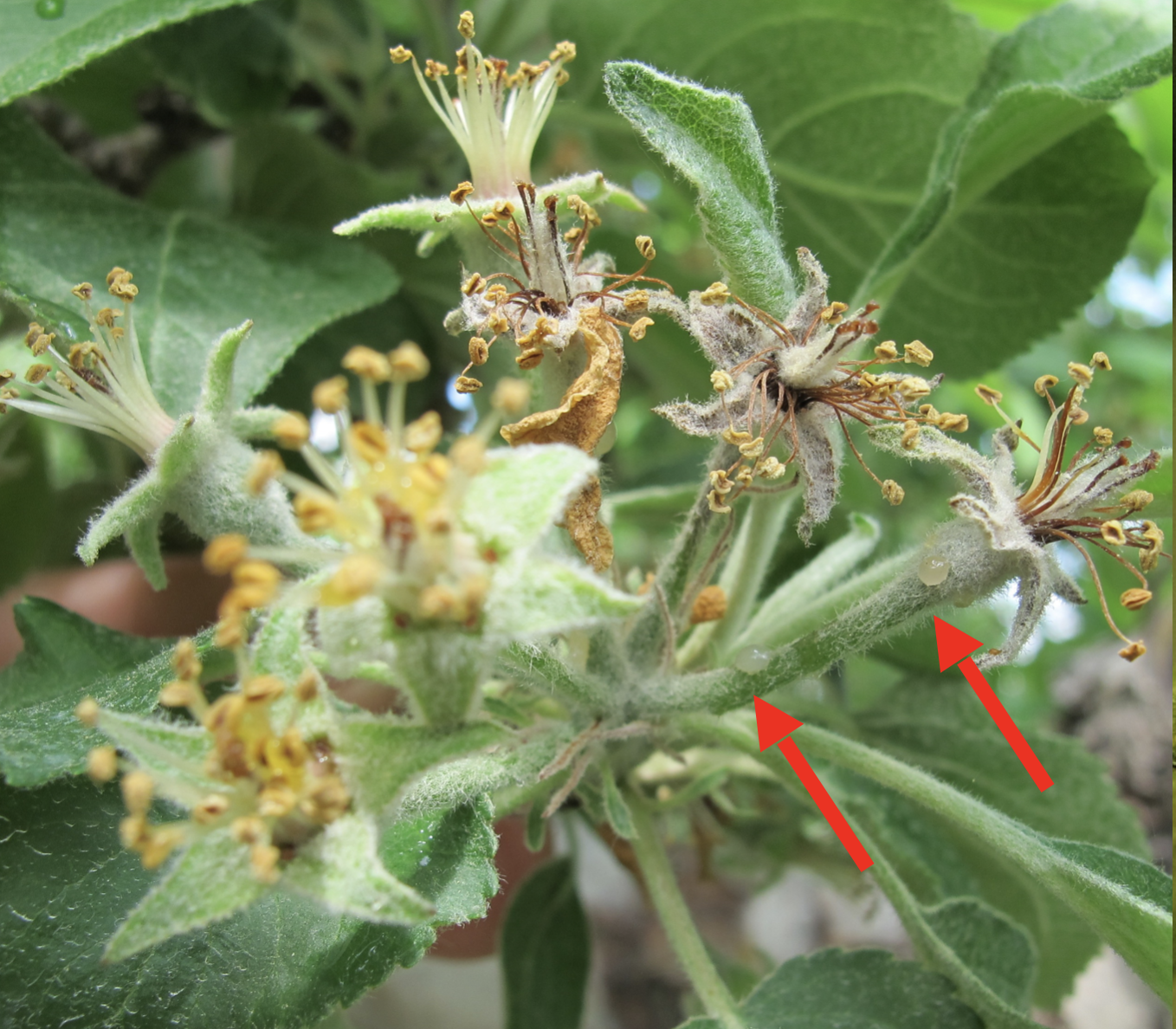}
    \caption{An infected cluster of blossoms exuding ooze. Reproduced with permission from Suzanne Slack and originally published in \citet{slack2017news}.}
    \label{oozepic}
\end{figure}

\subsection{The causal agent and the disease cycle} 
\textit{Erwinia amylovora}, the causal agent of fire blight, is a rod-shaped, gram-negative and motile species of bacteria. Although fire blight was originally reported to be a disease of apples and pears, the range of hosts which this bacterium can invade has since grown, and is summarized thoroughly in \citet{van2012losses}. It is important to note, for those interested in early reports on the bacterium, that the name of this pathogen underwent a series of changes between 1880 and 1920, before eventually falling under the \textit{Erwinia} class named after Erwin F. Smith. In order, these changes are: \textit{Micrococcus amylovorus} (1882), \textit{Bacillus amylovorus} (1889), \textit{Bacterium amylovorus} (1897), \textit{Bacterium amylovorum} (1915) and \textit{Erwinia amylovora} (1920).

Populations of \textit{E.amylovora} possess the ability to overwinter within infected trees through the formation of cankers \citep{rosen1933further,hildebrand1936overwintering}, allowing the pathogen, after an infection has occurred, to continue invading the tree until it is dead. Infected host tissue, such as these cankers, can release massive pathogen populations that are encased in an exopolysaccharide matrix \citep{eden1974bacterial}, commonly referred to as ooze. This ooze can hold bacterial populations upwards of one billion cells \citep{slack2017news}, range in size and colour \citep{slack2017microbiological}, and can serve as a source of infection for over a year \citep{hildebrand1939studies}. 

For previously blighted orchards, many believe that ooze acts as the primary source of infection in the new growing season, as over-wintered cankers can exude ooze in the spring. However, this may still be controversial, as it is noted that infections sometimes appear before ooze is detected \citep{van2012losses}. Once ooze emerges from a tree it can be disseminated by insects or rain. Flies, for example, appear to be attracted to ooze and it has been demonstrated that they can acquire and transport this ooze under various conditions \citep{boucher2019effects}. When ooze is delivered to open blossoms, the large pathogen population can be rapidly dispersed by pollinating bees and an epidemic can occur. For orchards that are infection-free, the pathogen can be introduced through a variety of ways, such as rain, insects, wind, birds and people. 

\begin{figure}
    \centering
    \includegraphics[width=0.6\textwidth]{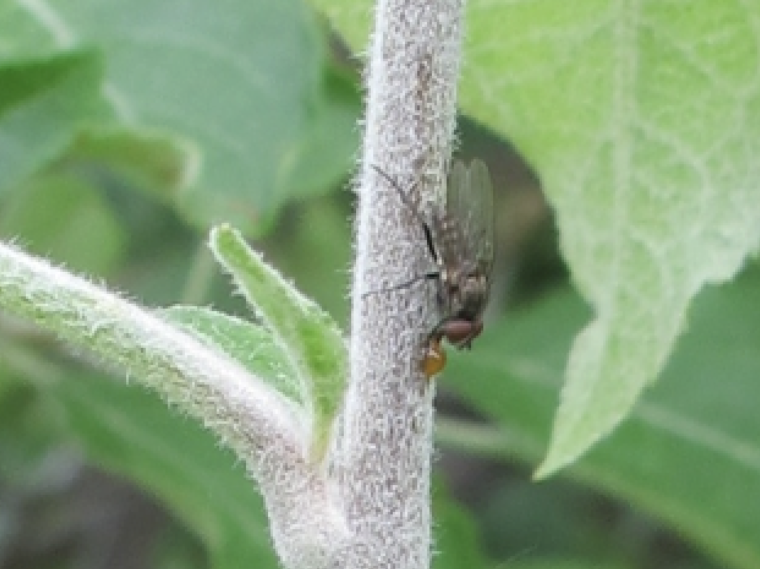}
    \caption{A fly coming into contact with fire blight ooze. Reproduced with permission from Suzanne Slack and originally published in \citet{slack2017news}.}
    \label{fig:my_label2}
\end{figure}

There are many stages of fire blight infection, each of which is often referred to by a unique name. For example, blossom blight would refer to an infection of an open blossom by the fire blight pathogen. Shoot blight, trunk blight and root blight are examples of other names that correspond to a fire blight infection of the shoot, trunk or root, respectively. Such infections can occur through natural openings in the tree, or through entry points caused by injury. As there is no cure for fire blight, most growers are concerned with preventing infection during the time of year in which trees are most susceptible, which is the blooming season. It is for this reason that in this work we focus solely on the infection of blossoms during bloom and the transmission of the disease between them. 

When the blossoms of apple and pear trees open in the spring, they provide a natural point of entry for the bacteria to invade the tree \citep{miller1929studies,rosen1929life}. Free moisture from rain or dew is required for the bacteria to move from places like the stigma to the hypanthium where infection can occur \citep{thomson1986role}. In \citet{rosen1936mode}, the author not only shows that the nectarial surface is the most susceptible location on the tree to infection, but also that the nectar serves as a good medium for the reproduction of the pathogen. 

The growth of the pathogen on floral surfaces appears to follow something similar to a sigmoidal curve \citep{wilson1989erwinia, wilson1990erwinia}. The bacterial population seems to grow exponentially when placed upon a young flower before slowing down and reaching a maximum population size. This may be due to intraspecific competition in the \textit{E.amylovora} population or due to the flower naturally becoming a less habitable place for the bacteria over time \citep{slack2022orchard, thomson2003influence}.

Transmission of the bacteria from blossom to blossom can occur via insects, rain, wind and human. Waite  was the first to notice that bees could transport the bacteria between flowers during pollination \citep{waite1891results}, and this has been confirmed since then \citep{van1980influence}. It seems safe to assume that these pollinators disseminate the bacteria faster than any other mode of dispersal. However, general opinion appears to be that bees are less responsible for fire blight epidemics than previously believed in the twentieth century, primarily due to their lack of observed contact with ooze. 

\begin{figure}
    \centering
    \includegraphics[width=\textwidth, height=5in]{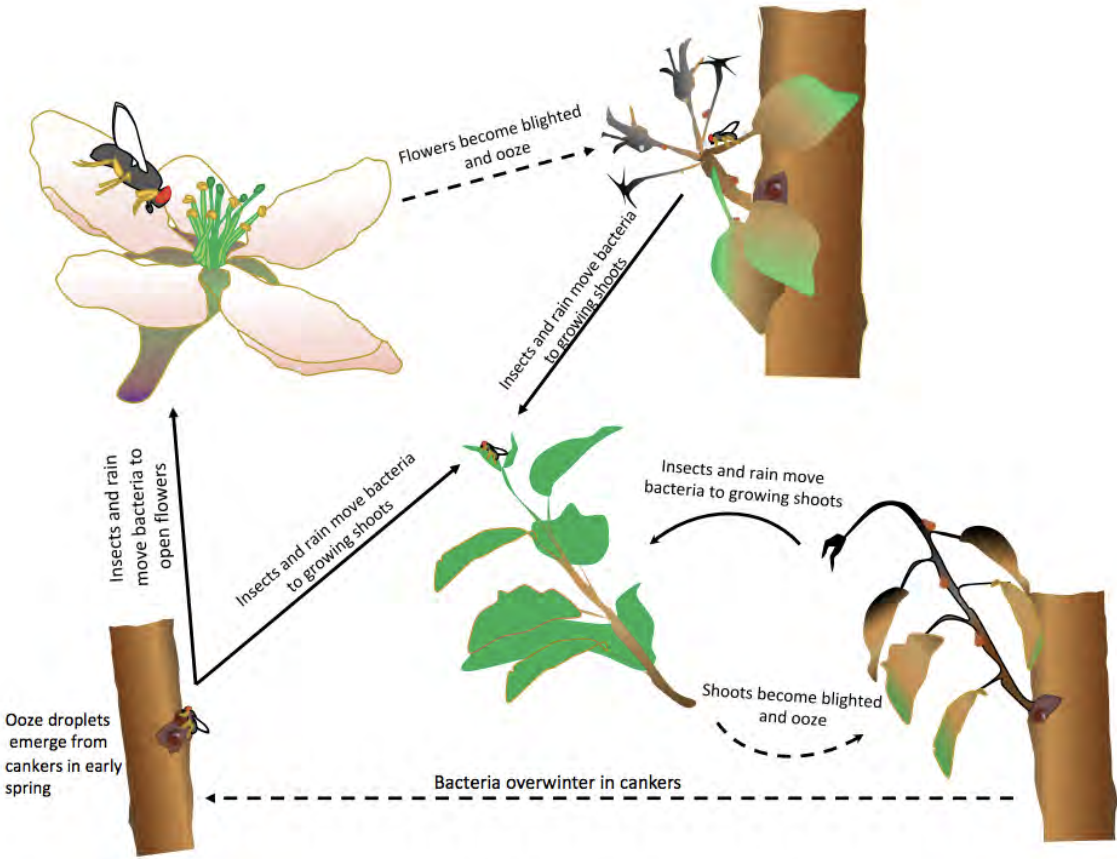}
    \caption{The fire blight disease cycle. Reproduced with permission from Suzanne Slack and originally published in \citet{slack2017news}.}
    \label{fig:my_label1}
\end{figure}

\subsection{Dynamic fire blight disease models}

To the best of our knowledge, there exist two dynamic disease models of fire blight. By a dynamic disease model, we mean a system of differential equations with the explicit goal of studying changes in disease severity through the compartmentalization of the host population into the usual classifications of susceptible, infected or removed, as done in the foundational work of Kermack and McKendrick \citep{kermack1927contribution}.  We do not review here, and comment on, the quantitative models that we mentioned earlier, such as \citet{billing1979warning,smith1992predictive,steiner1989predicting}; these models use statistical techniques to look for correlations between environmental conditions and real life infection data. While these tools  are undoubtedly valuable to growers, the objective of their models are different than the one presented in this work. Here, we aim to understand how different biological mechanisms and ecological interactions might alter, induce or prevent a fire blight epidemic. 

The earliest dynamic disease model to describe an outbreak of fire blight that we came across is \citet{iljon2012mathematical}. The authors formulated a model of six ordinary differential equations to study the daily changes in disease severity. The first four equations represent the host population, trees, classified into the four categories of susceptible and sprayed (with pesticide), susceptible and not sprayed, infected and sprayed, and infected and not sprayed. The other two equations divide pollinators into two classes, pollinators that carry the causal agent and those that do not. The authors explicitly assume that, since pollinators search for food randomly, susceptible hosts that neighbour an infected host are just as likely to become infected as susceptible hosts on the other side of the orchard. 

This model of six equations is then split into two simpler models for analysis. In the first model, the amount of antibiotic spray on each tree is held at a constant value for all time, allowing the authors to work with a model of four equations. The authors motivate the use of this constant-spray model by arguing that one could average out the amount of antibiotic applied over an entire year to a constant amount per day. They go on to study the partial rank correlation coefficients between their model parameters and their formulation of the basic reproductive number $R_0$. They find that both the size of the orchard and the number of bees within the orchard increase $R_0$, while spray efficacy and the rate at which infected branches are removed reduce $R_0$. With their partial rank correlation coefficients, they find that ``the impact of infection through nature appears insignificant when compared to infection from pollinators". 

The second simplification of the full model is realized through assuming that there is a constant number of bees carrying the pathogen per tree, therefore removing the two equations describing the transition of pollinators between the carrying-bacteria and not-carrying-bacteria classes. Under this model, they again find through a partial rank correlation study that orchard (host population) size is positively correlated with their formulation of $R_0$. They find that the rate at which one prunes infected branches is more negatively correlated with $R_0$ than spray efficacy, though neither of their partial rank correlation coefficients are necessarily strongly negatively correlated with $R_0$. 


In the second dynamic disease model of fire blight \citep{chen2018sliding}, the authors put an emphasis on the economics of pruning infected branches and replacing infected trees with susceptible ones. They formulate a Filippov model in which there are infection thresholds, such that control actions are only taken when disease levels surpass this value. This model takes the form of two ordinary differential equations, in which hosts are classified as susceptible or infected, with discontinuous right hand sides as the value of some model parameters change once an infection threshold has been passed. They make, as in the first model, the assumption that susceptible hosts who neighbour infected trees are just as likely to become infected as susceptible hosts on the other end of the orchard.  

They find that, between the control measures of pruning infected branches and replanting suggestible trees, that ``cutting off infected branches plays a leading role in reducing fire-blight infection, while the strategy of replanting susceptible trees contributes to minimizing economic losses and maximizing fruit production." They also find that the choices of the infection thresholds are important, and that the initial conditions of the disease can significantly impact what the appropriate control strategy is.

\subsection{Plant disease models and travelling waves}

Differential equation models are a common approach to understanding changes in disease severity in plant populations \citep{madden2007study}, and to aid in the development and assessment of remedial strategies. Typically, they aim to understand how the interactions between host, pathogen and the environment induce, prevent or alter an epidemic. 

Although plant disease models are more frequently studied in a purely temporal domain, implicitly assuming that the disease is homogeneous across the orchard or field, some works have focused on spatio-temporal epidemics. In some cases \citep{minogue1983models,van1977focus}, these models predict that the pathogen invades the host population with a constant speed of spread. There exists some experimental results to confirm these predictions \citep{minogue1983models}.
Mathematically, these are often described by travelling waves.  Models that admit travelling wave solutions arise in ecological, disease and population modelling \citep{diekmann1978thresholds, fisher1937wave, okubo2001diffusion}. 

Proving that a model admits travelling wave solutions can be a formidable task; for a more general overview of travelling wave solutions, see \citet{volpert1994traveling}. The existence of such solutions is often proved through arguments studying the stability of equilibrium points, taking a shooting-method approach or through the use of functional analysis tools. Although these methods have been applied to a variety of problems with success, e.g. \citet{dunbar1983travelling,logan2008introduction,volpert1994traveling}, applying them to specific systems of several equations is often challenging. A fixed point theorem based method, originally developed to prove the existence of travelling waves in delay-reaction-diffusion equations \citep{ma2001traveling}, has been successfully applied to a variety of models in recent years \citep{bhkr2005,shu2019traveling,wang2016traveling,wang2012traveling}. 

When modelling the invasion of a mobile pathogen into a stationary host population (such as in the case of plant diseases), one obtains a system of ordinary differential equations, coupled in each point of the domain to a system of partial differential equations of reaction-diffusion type. These are then parabolic systems that are everywhere degenerate, which poses additional difficulties for the analytical treatment.

Nevertheless, there is a considerable amount of work on studying travelling waves of such PDE-ODE coupled systems, in various application areas  \citep{Britton1991,DucrotGiletti,dunbar1983travelling,Hosono1995,Kallen1984,Logan2001hydrogeo}. Often this utilises a model specific relationship between the reaction terms. These approaches fail in the absence of such special structures or in the case of systems of more than two or three equations, which is the case in the model that we present. Here the ODE system is essentially an SIR type infectious disease model, in which infection rates depend on the pathogen population, which  is governed by a system of two semilinear diffusion-reaction equations. In our case the reaction terms of the disease model do not appear in the population model, and {\it vice versa}, the growth terms of the PDE component do not appear in the disease model. To study the possibility of travelling waves in this system we aim for a more general approach and adapt the above mentioned fixed point approach and extend the construction of the integral map on which it relies to such PDE-ODE coupled systems.

\section{Mathematical model} \label{chapter:banding}

We formulate a mathematical model for the spread of fire blight. We focus here on the time of bloom, and give up the restriction of previous models that the disease spread in an orchard is spatially homogeneous.

\subsection{Model assumptions}\label{ModAs:sec}

We list here the assumptions on which our model formulation will be based:

\begin{enumerate}[label=(A\arabic*)]

\item\label{A1}{\bf A spatial model is necessary.} Our first assumption is that fire blight is best modelled by a spatio-temporal system of equations. In contrast to the previous models of fire blight, we do not accept the idea that an infected tree is just as likely to transmit the disease to a neighbouring host as it is to a tree on the other end of the orchard. The other models justified this approach by saying that pollinators search for food randomly, and so transport the pathogen randomly. However, recorded measurements of bumble bee flight patterns \citep{heinrich1979resource} indicate that their movements tend to follow a long-tailed distribution, which is perhaps closer to the form of a L\'evy distribution \citep{viswanathan1999optimizing}. Such a distribution suggests that bumble bees tend to make shorter flights in a random direction, but are capable of occasionally flying longer distances. Even if one does not accept a L\'evy distribution for the flight pattern, one can recognize that the longest recorded flight lengths of bumble bees in \citep{heinrich1979resource} are orders of magnitudes smaller than a typical orchard size. If we assume that all orchard pollinators behave this way, and that the orchard is sufficiently large in comparison to the amount of land covered by a pollinator per day, then one can argue that a spatial model is necessary. 

Spatial consideration is also important from the viewpoint of disease management. In \citet{sanchirico2005optimal}, it is shown that neglecting spatial effects can lead to sub-optimal control strategies. Current fire blight control strategies often take the form of a homogeneous application of the pesticide Streptomycin. Although Streptomycin remains an indispensable tool, getting a uniform coverage of blossoms in an orchard with high foliage can be difficult. Further, the application of such pesticides can lead to an increase in antibiotic-resistant strains of the pathogen, as seen in \citet{el1989distribution,thomson1992presence}. Integrated pest management (IPM) is an approach to pest control that aims to minimize economic, health and environmental risk. Some examples of IPM strategies in the case of fire blight would be the removal of fire blight infected tissue, the bio-control of certain insects through the introduction of natural predators or trapping, or the targeted delivery of pathogen control substances through pollinators. Insight into how to optimize IPM strategies may be gained by studying how vector, host and pathogen interact in a spatial setting. It is in part the objective of this work to contribute to the discovery of new methods for the control of fire blight, by contributing to a better understanding of how the disease spreads. 

Finally, we note that our (M.P.'s) personal observation (without quantifying it) from field work is that typically fire blight exists in some particular area of the orchard, often along a single row of trees or in a corner, and that it is not randomly occurring throughout the orchard, as one might expect if each tree is equally likely to come in contact with the pathogen.  This too serves as a source of motivation for considering a spatial model of fire blight spread. 

\item\label{A2}{\bf Compartmentalization.}  We assume that we can characterize the bacteria in the orchard into two states. The first is the bacterial population existing on a susceptible location within the floral cup, and the second is the bacterial population living within the secreted ooze outside of the floral cup. With regards to the ooze, we are assuming that as long as it is present, it can be transported and used to infect hosts. Implicitly,  we are thus assuming that the ooze does not dry up and become inaccessible for insects to spread. 

We also assume that the hosts of this disease model, open blossoms, can be categorized as either susceptible to the disease, infected and infectious through the production of ooze, or removed (meaning dead). 

\item\label{A3}{\bf Homogeneous host distribution.} We make the assumption that the host population is initially distributed evenly across the spatial domain. One could argue that the more accurate representation of the host population is one that is patchy in space, with clusters of the host existing at random points throughout the domain. However, we assume here that blossoms are dense enough in an orchard such that we take an average number of blossoms at each point in space without losing biological relevancy. This might be a reasonable assumption that approximates the situation in many industrial orchards well.

\item\label{A4}{\bf Pathogen multiplication on flowers.}  The growth of the pathogen within the floral cup is dependent on the current size of the bacterial population and the amount of resources available for consumption. We assume that the amount of resources available for consumption is dependent on the health of the flower, such that as the flower begins to die, the amount of resources available for consumption decline. We further assume that when all flowers are dead at a particular location, there can still exist some small, possibly dormant, population of the pathogen. 

\item\label{A5}{\bf Ooze production.} We assume that the production and secretion of ooze is a function of the number of flowers currently infected. 

\item\label{A6}{\bf Ooze decay.} We assume that ooze decays at a constant rate. Originally, this was to represent that ooze is no longer viable for infection after one or two years \citep{slack2017news}, however, it can also be interpreted as a rate that describes how fast the ooze becomes inaccessible for insect dispersal, perhaps by human removal or through environmental factors. That is, it can be interpreted also as a parameter to incorporate disease management practices. In our simulation experiments, we treat it as originally intended.

\item\label{A7}{\bf Infection.} The rate at which flowers transition from susceptible to infectious is dependent on the local floral pathogen population. We further assume that in order for infection to occur, there needs to be some threshold amount of the pathogen exceeded.

\item\label{A8} {\bf Death.} We assume that the rate at which flowers die is a function of how  strongly infected the population is, such that a greater number of infected flowers will weaken the local host population as a whole. This is to reflect the fact that these flowers are connected via a cluster, shoot or branch, and since the bacteria systematically invades the tree, the health of each host is in some sense dependent on the health of all the other hosts at the same location. We further assume that when only a small number of hosts are infected at a location, the average death rate of the hosts is close to zero, but when the infected population exceeds some threshold, the average death rate approaches some maximum value.

\item\label{A9}{\bf Transport.} We assume that the pathogen within the floral cup is transported by pollinators that move randomly throughout the orchard. We make the same assumption for the transport of ooze, but that this process is facilitated by non-bee insects such as flies. We do not account for the dissemination of the pathogen due to wind, rain or human. 
In contrast to the pathogen, the host population (flowers) is immobile and assumed to remain stationary at all times.  
 
\item\label{A10}{\bf Ooze conversion.} The pathogen can be transferred from living within the secreted ooze to living on the surface of flower. We assume that this liberation process is dependent on the rate at which ooze-carrying vectors visit flowers, the amount of ooze at that location, and the density of healthy flowers at that position. The underlying assumption here is that ooze-carrying vectors prefer to feed on the secreted ooze of infected flowers over the nectar of healthy flowers, and are more likely to deposit greater amounts of ooze to flowers (by pollinating for longer at that location) when there are a greater number of healthy flowers available.

\item\label{A11}{\bf Boundary conditions.} We assume that the pathogen does not leave the orchard by the transport of insects and the pathogen cannot be brought into the orchard by insects. In reality, orchards are surrounded by a variety of ecosystems, which makes the choice of proper boundary conditions highly context dependent, so to some extent this is an arbitrary choice that we are forced to make here without introducing more specific situational detail.
\end{enumerate}

\subsection{Governing equations}

\subsubsection{Model formulation}

Based on the assumptions \ref{A1}-\ref{A11} we propose the following dynamical model for the spread of fire blight in orchards during bloom. In accordance with \ref{A1} the model is spatially explicit. For simplicity of notation and mathematical treatment we consider first the one dimensional setting, but point out that the model itself will be straightforward to extend to the 2D setting. The model is then formulated in terms of the independent variables location $x$ (meters) and time $t$ (days). The model domain is then 
\begin{equation*}
    \Omega := (x,t) \in [0,L] \times [0,+\infty) \ .
\end{equation*}
The dependent variables for our model are density of the bacterial populations that are dispersed in the domain, and the host population that is stationary. In accordance with assumption \ref{A2} above, both will be subdivided. The former consists of the bacteria existing within the floral cup,  $B(x,t)$ (CFUs per meter), and  the concentration of the pathogen in the form of ooze by $O(x,t)$ (CFUs per meter). 
The hosts are the blossom flowers, which we subdivide into susceptible to the disease, $S(x,t)$, infected by the disease, $I(x,t)$, or removed, $R(x,t)$, i.e dead flowers. These all are represented by their density (number of flowers per meter).
The spatio-temporal model then reads

\begin{eqnarray}
          \partial_t B&=&  \underbrace{D_1 \partial_x^2 B}_{\text{A9}} +\underbrace{rB\left(1-\frac{B}{K(S+I)+\epsilon}\right)}_{\text{A4}} +\underbrace{\mu OS}_{\text{A10}}  \label{B:pde}\\ 
          \partial_t O &=& \underbrace{D_2 \partial_x^2 O}_{\text{A9}} + \underbrace{\alpha I}_{\text{A5}} - \underbrace{\mu OS}_{\text{A10}} - \underbrace{\gamma O}_{\text{A6}} \label{O:pde} \\ 
    \partial_t S &=& \underbrace{-  f(B) S}_{\text{A7}} \label{S:ode}\\ 
    \partial_t I &=& \underbrace{f(B)S}_{\text{A7}}- \underbrace{g(I) I}_{\text{A8}} \label{I:ode}\\ 
    \partial_t R&=& \underbrace{g(I)I}_{\text{A8}} \ . \label{R:ode}
\end{eqnarray}

This PDE-ODE coupled system of equations is to be augmented with initial conditions for all dependent variables, and with boundary conditions for~$B$ and~$O$. 
At time $t=0$ we have
$$S(x,0)=S_0(x), \quad I(x,0)=I_0(x),\quad R(x,0)=R_0(x)\ , $$
all of which are non-negative.
As per assumption \ref{A3},  the host population is initially evenly-distributed, i.e
\begin{equation*}
    S_0(x)+I_0(x)+R_0(x)=N \ \ \text{(Flowers per meter)}\ \ \text{for all }x \in [0,L]\ ,
\end{equation*}
where $N$ is the flower density.
We remark that this is not a strict requirement for the analyis, nor for the simulations, for which it suffices that each initial data is bounded. 
The initial conditions for the pathogen populations, $B(x,0)=B_0(x)$ and $O(x,0)=O_0(x)$, are assumed to be non-negative bounded and uniformly continuous.

Per \ref{A11} we assume that insects do not carry the pathogen into or out of the orchard. This was motivated by the idea that the amount of bacteria that an insect can transport across the boundaries of our domain is negligible in comparison to the total amount of bacteria in an orchard infected with fire blight. 
Many different assumptions regarding the behaviour of the pathogen at the boundary could have been made, each of which would result in different boundary conditions for our differential equations. Dirichlet conditions, for example, could be used to describe a source of the pathogen existing at the end of a row, or perhaps a hostile environment in which the pathogen cannot survive. Alternatively, choosing Robin type conditions could allow us to model both a pathogen source existing outside of the orchard as well as the in-flow and out-flow of the bacteria due to natural pollinators. Of course, we know that for a wholly susceptible orchard to become infected there must be an in-flow of the pathogen from somewhere, but since the nature of primary infection can vary drastically, this becomes a difficult process to model. In this work, the assumption we made is that homogeneous Neumann conditions hold, i.e.
\begin{equation}\label{BC:eq}
  \partial_x B(0,t) = \partial_x B(L,t) =0, \quad
     \partial_x O(0,t) = \partial_x O(L,t) =0.
\end{equation}

By doing this, we are essentially modelling an orchard that exists in a place of isolation, such that the in-flow and out-flow of the pathogen is minimal enough to be neglected. This is clearly not an ideal nor accurate representation of an agricultural system, but it does allow for less guesswork in the modelling process and a simpler mathematical analysis.

The first terms on the right hand sides of (\ref{B:pde}) and (\ref{O:pde}) model the dispersion of the pathogens by insects, per assumption \ref{A9}. Going forward we will use that 
\begin{equation}\label{D1gtD2:eq}
D_1 \geq D_2 \ ,  
\end{equation}
because $B$ is transported mainly by managed pollinators, primarily honeybees. In a commercial setting, we will assume that those are present in sufficient numbers that they visit flowers reliably and fast, and are more efficient at visiting flowers across the orchard than other insects are.

In \ref{A4}, the assumption is that the pathogen population size would grow exponentially
provided that there was access to unlimited resources for survival and reproduction, but this population size is ultimately limited by a carrying capacity dependent on the health of the flower. These dynamics give rise to a sort of logistic growth curve for the pathogen, which would be consistent with the experiments in \citet{wilson1989erwinia, wilson1990erwinia}, but inconsistent with the idea that bacteria do not grow as well on older flowers as suggested in \citet{slack2022orchard}. Mathematically, we translate this assumption into the logistic term in (\ref{B:pde}). Here, $r$ (1/day) is the maximum bacterial growth rate. The small parameter $\epsilon$ (CFU/meters) is mathematically a regularisation parameter; biologically it is the density of surface associated pathogens that can be sustained on the trees in the absence of flowers. This carrying capacity increases in the presence of flowers. The term $K(S+I)$ describes how many bacteria can be sustained in the presence of viable hosts; parameter $K$ has the units CFU/flowers.

Per assumption \ref{A10} bacteria in the ooze are converted into the flower associated form at the same location, i.e. a transfer between pathogen fractions~$O$ and~$B$. We model this process to be proportional to both, the ooze density and the density of uninfected flowers. This is the last term in (\ref{B:pde}), where it is a source term, and the third term in (\ref{O:pde}), where it is a sink term. The parameter $\mu$ (meters/(flowers*day)) is the rate at which ooze is deposited onto healthy flowers per one increase in susceptible host density. 

Per \ref{A5}, ooze production in (\ref{O:pde}) is related to current infection level, which is represented in our model by density of infected flowers.  It is important to note that not all infected blossoms necessarily ooze, and so we are also assuming that the environmental conditions are highly conducive to the production and secretion of ooze, such that every infected flower does so.  We assume this to be subsumed in the parameters of the ooze production function $h(I)$. We know that there can be no secretion of ooze if there are no flowers actively infected with the pathogen, so we require $h(0)=0$. It would also seem natural to assume that this function is a monotonically increasing function of $I$. In the absence of more specific information and with the goal of limiting model complexity, we assume that ooze production is proportional to the infected host population, $h(I)=\alpha I$. The parameter $\alpha$ (CFUs per flower per day) can be interpreted as the average amount of ooze produced by an infected flower each day. We assume that $\alpha$ is a biological property of the tree and of the pathogen and that this parameter does not change over time.  Per \ref{A6} we assumed that ooze looses viability at a constant rate $\gamma$ (per day), which introduces the linear sink term in (\ref{O:pde}).

Infection in our model follows a two-step process, as is common for many infectious disease models. Per \ref{A7} the infection rate depends on the density of host associated pathogen, which we denote by $f(B)$ in (\ref{S:ode}), (\ref{I:ode}).  This infection incurs if the pathogen density exceeds a certain threshold. Assuming that the expression of virulence factors, as it is often the case for bacterial diseases, is controlled by a quorum sensing mechanism. To allow for cell and environment specific variability in the expression of virulence factors, we propose to model the dependence of the infection on bacterial density by a smooth Hill (rather than Heaviside) function,
\begin{equation}\label{ffun:eq}
    f(B)= M_1 \frac{B^{n_1}}{A_1^{n_1}+B^{n_1}}\ , 
\end{equation}
where parameter $M_1$ (per day) is the maximum infection rate, and $A_1$ (CFUs per meter) is the average critical population density, that controls expression of virulence factors. The Hill exponent $n_1>1$ controls how swift the switch from an almost negligible to a proportional infection rate is.
 Similarly, per \ref{A8}, we assume that the dying off of flowers due to the disease depends on the infection level of the host, i.e. on~$I$. Thus we have this expressed by a function~$g(I)$ in~(\ref{I:ode}) and~(\ref{R:ode}).  Assuming that at low infection levels this is negligible, but becomes proportional to the density of infected hosts if the number of infected hosts is large, we again introduce a smooth threshold function for this process, namely
\begin{equation}\label{gfun:eq}
    g(I)= M_2 \frac{I^{n_2}}{A_2^{n_2}+I^{n_2}}\ . 
\end{equation}
Here the constant parameter $M_2$ (per day) is the maximum host death rate, and $A_2$ (flowers per meter) is the threshold parameter that describes the switch from an almost negligible to a proportional death rate. The Hill exponent $n_2>1$ controls how swift this switch is. Going forward, in our simulations and analysis, we will use 
\begin{equation*}
n_1=n_2=2
\end{equation*}
for convenience. We do not think that fixing these parameters has a major influence on model behavior.

As pointed out above, the model we propose is for the period of bloom, which typically lasts only a few weeks. Therefore the time scale that we consider is of the order of days.  This suggests that the usual analytical methods that are commonly employed to study long term behaviour of such dynamical models as $t\rightarrow \infty$ might be limited in what they can predict. A full list of the model parameters and their units are presented in Table \ref{param_tab}.

\begin{table}
\centering
\caption{Model parameters, their description and units.}
\begin{tabular}{||p{0.12\linewidth} | p{0.5\linewidth} |p{0.28\linewidth} ||}
 \hline
 Parameter & Description & Units \\ [0.5ex] 
 \hline\hline
 $N$ & Average number of flowers at each location. & Flowers per meter.\\
 \hline
 $D_1$ & Diffusion coefficient for pathogen dispersal by bees. & $\text{Meters}^{2}$ per day. \\ 
 \hline
$D_2$ & Diffusion coefficient for ooze dispersal by non-bee vectors. &$\text{Meters}^{2}$ per day. \\
 \hline
 $K$  & \textit{E.amylovora} carrying capacity of a living flower.  &CFUs per flower.\\
 \hline
  $\epsilon$  & Carrying capacity of a completely dead flower population.  & CFUs per meter. \\
 \hline
 $r$ &  Growth rate of floral pathogen population.  & Per day.\\
\hline 
 $\mu$ & Pathogen liberation rate.  & Meters per flower per day.\\
\hline 
 $\gamma$ & Ooze decay rate.  & Per day. \\
\hline 
$\alpha$ & Ooze production rate.  & CFUs per flower per day.\\
\hline
$M_1$ & Maximum infection rate.  & Per day. \\
\hline 
$M_2$ & Maximum death rate.  & Per day.\\
\hline
$A_1$ & Threshold parameter for invasion of \textit{E.amylovora}.  &CFUs per meter.\\
\hline
$A_2$ & Threshold parameter for the death of flowers.  & Flowers per meter. \\
\hline
$n_1$ & Hill function exponent for invasion function.  &--- \\
\hline
$n_2$ & Hill function exponent for death-rate function.   &--- \\ [1ex] 
 \hline
\end{tabular}
\label{param_tab}
\end{table}

\subsubsection{Well-posedness of the initial boundary value problem}\label{exist:sec}

\begin{lemma} \label{lemma:non-neg}
Solutions to system (\ref{B:pde})-(\ref{BC:eq}) with (\ref{ffun:eq}), (\ref{gfun:eq})  remain non-negative for all time $t\in [0,+\infty)$ and are bounded from above by positive constants.
\end{lemma}

\begin{proof} First we show that the ODE system remains non-negative and bounded above by positive constants. This follows with standard arguments for ODEs. More specifically, we find that if $N:=\max\limits_{0\leq x \leq L}  S_0(x)+I_0(x)+R_0(x)$, then
\begin{align*}
0 \leq S_0(x) e^{-M_1t} \leq S(x,t)  &\leq N ,\\
0 \leq I_0(x) e^{-M_2t} \leq  I(x,t)  &\leq N ,\\
0 \leq R_0(x)\leq  R(x,t)  &\leq N .
\end{align*} 

Using the non-negativity and upper bounds of $S,I$, the parabolic comparison theorem
and spatially uniform solutions to the associated ordinary differential inequalities we find in the usual manner  for $0\leq x \leq L, 0\leq t$ that

\begin{equation*}
   \min\limits_{0\leq x \leq L} O_0(x) e^{-(\mu+\gamma)\cdot N \cdot t}  \leq  O(x,t) \ \leq \ \max\left\{ \max\limits_{0\leq x \leq L} O_0(x) , \frac{\alpha N}{\gamma}\right\} \ \ \ \ \text{for all} \  \ (x,t) \in \Omega \ ,
\end{equation*}
and similarly, the bounds on $O(x,t), S(x,t), I(x,t)$
guarantee that
\begin{equation*}
    0\leq B(x,t)\leq  \max \  \left \{ \max\limits_{0\leq x \leq L} B_0(x) ,  \ \frac{K\cdot N+\epsilon}{2} \left( 1 + \sqrt{1+\frac{4\mu \max\left\{ \max\limits_{0\leq x \leq L} O_0(x) , \frac{\alpha N}{\gamma}\right\}}{r(K\cdot N+\epsilon)}} \right) \ \right \} ,
\end{equation*}
which again is derived by comparison against spatially uniform solutions. 
\end{proof}

With this in hand we can state the following existence and uniqueness result for our model on the bounded domain. Let $BC \cap L^{\infty}$ denote the class of functions that are bounded and uniformly continuous on $\mathbb{R}$ and $L^{\infty}$-integrable.

\begin{theorem} \label{theorem: 2.0.2}
For a set of non-negative initial conditions where
\begin{equation*}
    S_0(x)+I_0(x)+R_0(x)=N \ \ \text{for all } x \in [0,L],
\end{equation*}
and
\begin{equation*}
    B_0(x),O_0(x),S_0(x),I_0(x),R_0(x) \in   BC \cap L^{\infty} \ ,
\end{equation*}
system (\ref{B:pde})-(\ref{BC:eq}) with (\ref{ffun:eq}), (\ref{gfun:eq})  admits a unique solution on any time interval $t \in [0,T]$ such that 
\begin{align*}
    B(x,t),\ O(x,t),\ S(x,t),\ I(x,t),\ R(x,t) \in C([0,T],  BC \cap L^{\infty}) \ .
\end{align*}

\end{theorem}

\begin{proof} (outline) The ideas of this proof come directly from Chapter 14 of \citep{smoller2012shock}. It follows from our Lemma \ref{lemma:non-neg} that the solutions of system arising from the constraints on our initial conditions remain non-negative and bounded above by the constants on the right hand side of each inequality for all time. Using Definition 14.1 and Theorems 14.2-14.4 in Smoller's book \citep{smoller2012shock}, one can define a contraction mapping in a Banach space, from which one can apply Banach's fixed point theorem to show that a unique fixed point to the mapping exists. This fixed point gives a unique solution for the system up to some time $t_0$. The time $t_0$ depends only on the non-linearity of the system and the a-priori upper bound on the initial conditions, and since the solutions remain bounded for all time by these constants, we can iterate this mapping by taking the values of the solution at $t_0$ as initial conditions to extend this solution over any time interval $[0,T]$, giving a unique solution 
$B, O, S, I , R \in C([0,T),  BC \cap L^{\infty}).$
\end{proof}

\subsubsection{Disease invasion in the fire blight model: Exploratory simulations}\label{sim:sec}

To explore the behaviour of solutions of our model we conducted first numerical simulations, using the R language \citep{Rcite}, more specifically the function ``ode.1d"  of the package {\tt deSolve} \citep{soetaert2010solving}, which applies finite differencing to discretize in space. We used the implicit Adams scheme for time integration.  

In Figure~\ref{illustTW}, we present one particular simulation that is representative of what we typically saw for a variety of parameter sets. More specifically, we present snapshots in time after 35, 40, and 45 days, i.e. within a time period of 5 to 7 weeks.
The parameters for this figure were chosen more or less arbitrarily for the point of illustration, and are listed in Table \ref{TWtable} below. For initial conditions, we began with an ooze concentration of $10^4$ CFUs per meter at $x=0$ and nowhere else. All other model components were set to be zero everywhere, except for the susceptible population, which was set to a constant value $N$ at each point. 

\begin{table}
\centering
\caption{Model parameters used in the exploratory simulations presented in Figures~\ref{illustTW}.}

\begin{tabular}{||p{0.5\linewidth} | p{0.08\linewidth} |p{0.08\linewidth} | p{0.2\linewidth} ||}
 \hline
 Parameter & Symbol& Value & Units \\ [0.5ex] 
 \hline\hline
  Average number of flowers at each location. & $N$ & $5$ &Flowers per meter.\\
  \hline
 Diffusion coefficient for pathogen dispersal by bees. & $D_1$  & $20$ & $\text{Meters}^2$ per day. \\ 
 \hline
 Diffusion coefficient for ooze dispersal by non-bee vectors. & $D_2$ & $10$ &  $\text{Meters}^2$ per day. \\ 
 \hline
  \textit{E.amylovora} carrying capacity of a living flower.& $K$  & $10^6$ & CFUs per flower. \\ 
 \hline
  Carrying capacity of a completely dead flower population. & $\epsilon$   & $10^5$ & CFUs per meter. \\ 
 \hline
Growth rate of floral pathogen population. & $r$  & $0.5$ & Per day. \\ 
 \hline
Ooze conversion rate.  & $\mu$     & $0.5$ & Meter per flower per day. \\
 \hline
Ooze decay rate.  & $\gamma$   & $1$ & Per day.\\
 \hline
   Ooze production rate.  & $\alpha$   & $10^8$ & CFUs per flower per day.\\
 \hline
    Maximum infection rate. & $M_1$   & $1$ & Per day.\\
 \hline
     Maximum death rate. & $M_2$  & $1$ & Per day. \\
 \hline
   Threshold parameter for invasion of \textit{E.amylovora}.  & $A_1$  & $ 10^6$ & CFUs per meter. \\
 \hline
   Threshold parameter for the death of flowers. & $A_2$    & $1$ & Flowers per meter. \\
 \hline
    Hill function exponent for invasion function. & $n_1$    & $2$ & ---\\
 \hline
     Hill function exponent for death-rate function.  & $n_2$     & $2$ & ---\\
 \hline
\end{tabular}
\label{TWtable}
\end{table}

\begin{figure}
\includegraphics[width=\textwidth]{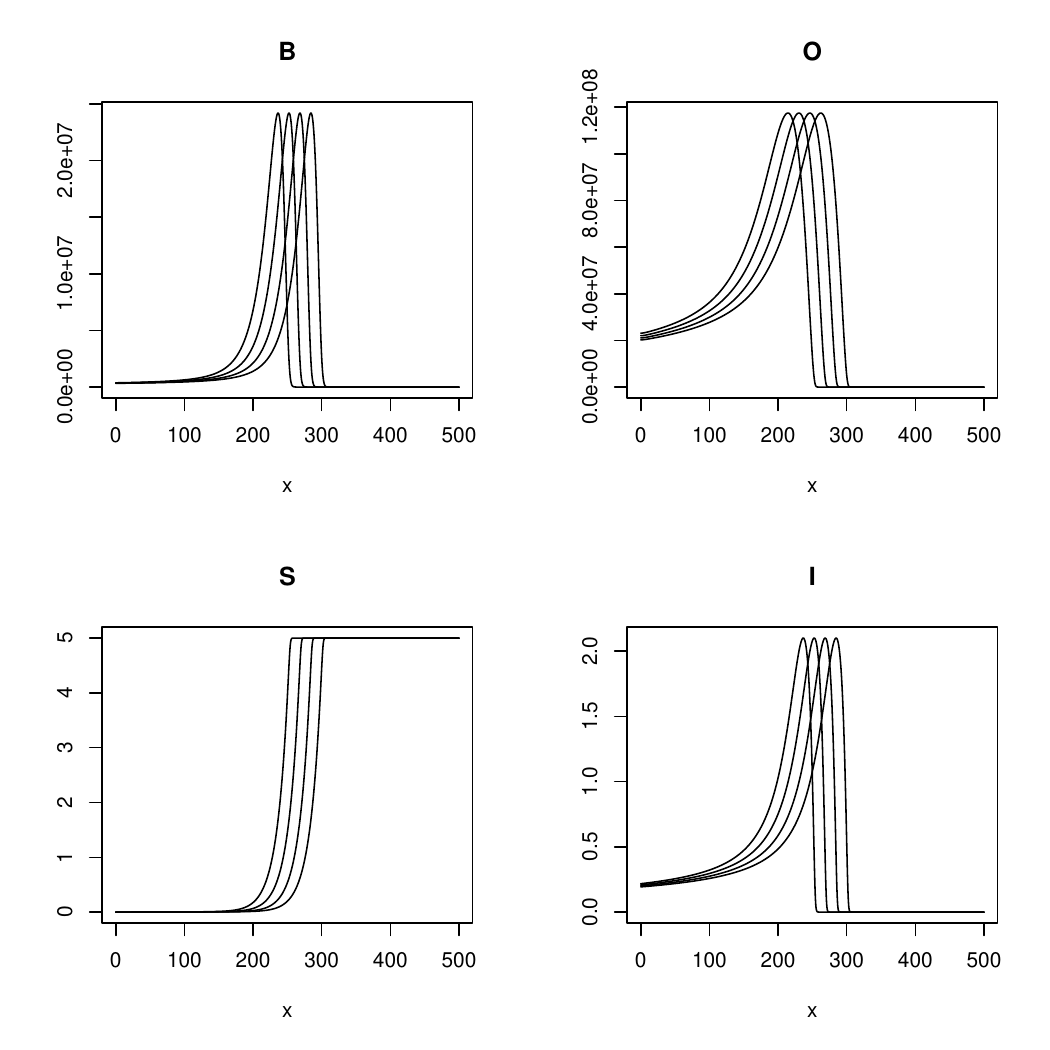}
\caption{An illustrative simulation to show the typical dynamics of the model. Shown are $B,O,S,I$  at three equally spaced time points ; the parameters used for this simulation are summarized in Table \ref{TWtable}. Not shown are the data for $R$, since $R=N-S-I$ decouples from the system.}
\label{illustTW}
\end{figure}

The visualisation of the simulation in Figure~\ref{illustTW} shows that both pathogen populations $B$ and $O$ invade the initially disease free domain. After some initial transient period, for given $t$ there is a sharp peak for both $B$ and $O$ at about the same location, and behind these peaks both trail off. The  floral pathogen population to a value of $\epsilon$, the ooze bound pathogen population eventually to a value of $0$, but much slower than $B$. The infected host population shows similar behaviour, with a steep incline at the wave front and a slower decay in its wake.  The susceptible host population $S(x,t)$ decreases in time monotonically from $N$ in front of the wave  to 0 in its wake with depletion where the disease originates. The behaviour of $R(x,t)$ (data not shown) mirrors the behaviour of $S(x,t)$: it is monotonically decreasing in space with no dead hosts ahead of the pathogen wave and an entirely dead host population behind it. 

Upon closer inspection, it appears that each component of this solution takes a shape that does not change, but moves with a constant speed. This suggests that the invasion of the domain by the disease might be described by a travelling wave solution of the model. Noteworthy here is that two of the components are monotonic (susceptible and removed host populations~$S$ and~$R$), two are pulse waves that drop to 0 after the peak (ooze population $O$ and infected hosts $I$), and one is a pulse wave that levels off at  constant value in the wake of the peak (floral pathogen population $B$).  The solution profile appears to already have been established at the first time instance shown. This suggests that its formation, i.e. the invasion of an initially disease free orchard, happens quickly, on a time scale of only a few weeks which coincides with the intended time frame of bloom for which the model was developed.  Motivated by these simulations, we set out to investigate further whether our model permits travelling wave solutions, and under which conditions on parameters.

\section{Travelling waves analysis}

\subsection{Critical wave speed for $n_1=2$}

A travelling wave $(B,O,S,I)$ of (\ref{B:pde})-(\ref{I:ode}) with (\ref{ffun:eq})-(\ref{gfun:eq}) is an entire in time solution depending only on a moving variable $z:=x-ct$, where the wave speed $c$ is to be determined. These solutions, as functions of $z$, satisfy
 \begin{align}
-cB'(z) - D_1 B''(z) &= rB(z)\left(1-\frac{B(z)}{K(S(z)+I(z))+\epsilon}\right) + \mu \cdot O(z)\cdot S(z),\label{B:tweq} \\
-cO'(z) - D_2O''(z) &= \alpha \cdot I(z) - \mu \cdot O(z)\cdot S(z) - \gamma O(z),  \label{O:tweq}\\
-cS'(z) &= -f(B(z))S(z), \label{S:tweq}\\
-cI'(z) &= f(B(z))S(z) - g(I(z))I(z) \ . \label{I:tweq}  
\end{align}
They should be positive and connect the homogeneous steady states
\begin{equation}\label{asymp+}
(B,0,S,I)  (+\infty) = (0,0,N,0)\ 
\end{equation}
and
\begin{equation}\label{asymp-}
(B,0,S,I)  (-\infty) = (\varepsilon,0,0,0)\ ,
\end{equation}
corresponding respectively to the orchard's state before and after the disease outbreak. We omit in our analysis the equation for $R$, as it decouples from the system.

We conjecture that there exists some $c^* >0$ such that a travelling wave exists if and only if $c \geq c^*$. The value of $c^\ast$ should be expected to depend on model parameters.  Below we only address existence for non critical values $c > c^*$. As usual, the critical wave speed $c^*$ is determined by linearization around the invaded steady state $(0,0,N,0)$. 
The linearized system writes as
\begin{equation}\label{linsys:eq}
\left\{
\begin{array}{l}
 \partial_t B = D_1 \partial_x^2  B + r B  + \mu N O \ , \vspace{3pt}\\
\partial_t O = D_2 \partial_x^2 O + \alpha I - \mu N O - \gamma O\ , \vspace{3pt}\\
\partial_t S = - f' (0) N B\ , \vspace{3pt}\\
 \partial_t  I=  f'(0) N B \ .
\end{array}
\right.
\end{equation}

Due to the model assumption \ref{A7} and the subsequent discussion of (\ref{ffun:eq})  we restrict ourselves to the case $n_1 >1$, where we have $f'(0) = 0$. Going forward, for simplicity we will pick $n_1 = 2$ in our calculations, as we did previously in the sections \ref{exist:sec} and \ref{sim:sec}, however, we conjecture that our argument of this section will also apply to $n_1 \geq 2$.  
Thus we are left with
\begin{equation*}
\left\{
\begin{array}{l}
 \partial_t B = D_1 \partial_x^2  B + r B  + \mu N O\ ,  \vspace{3pt}\\
\partial_t O = D_2 \partial_x^2 O - \mu N O - \gamma O\ . \vspace{3pt}\\
\end{array}
\right.
\end{equation*}
Next, we make an exponential {\it ansatz}
$$(B,O)=(B_\lambda, O_\lambda)\  e^{-\lambda (x-ct)}\ , $$
Upon substituting this into (\ref{linsys:eq}) we obtain the triangular system:
\begin{equation*}
\left\{
\begin{array}{l}
 c \lambda B_\lambda = D_1 \lambda^2 B_\lambda + r B_\lambda  + \mu N O_\lambda\ , \vspace{3pt}\\
 c \lambda O_\lambda = D_2 \lambda^2 O_\lambda  - \mu N O_\lambda - \gamma O_\lambda \  .
\end{array}
\right.
\end{equation*}
This leads us to introduce the matrix
$$M_\lambda = \left( 
\begin{array}{cc}
D_1 \lambda^2 + r & \mu N \\
0 & D_2 \lambda^2 - \mu N - \gamma 
\end{array}
\right)\ 
,
$$
and to compute
$$\Lambda (\lambda) := \max \{ D_1 \lambda^2 +r , D_2 \lambda^2 - \mu N - \gamma \}\ ,$$
the largest (positive) eigenvalue of $M_\lambda$, which by the Perron-Frobenius theorem (or an elementary computation) is associated with a positive eigenvector.

We then define
$$c^* := \min\limits_\lambda\left\{ \frac{\Lambda (\lambda)}{\lambda}\right\} >0\ ,$$
which according to the above is the minimal speed for such a (positive) exponential ansatz to solve the linearized problem. By analogy with the standard reaction-diffusion equation with logistic growth, or Fisher-KPP~\citep{fisher1937wave,kolmogorov1937study}, we conjecture that this is the minimal traveling wave speed.
Due to model assumption~\ref{A9} and $D_1 \geq D_2$ as in (\ref{D1gtD2:eq}), we have
$\Lambda(\lambda)= D_1 \lambda^2 +r$ and after some straightforward calculations, we find that
$$c^* = 2 \sqrt{D_1 r}\ .$$
This suggests that the travelling wave speed and exponential decay are fully determined by the parameters in the governing equation for $B$. 

This value $c^\ast$ is already known to be the minimum travelling wave speed for the classical Fisher equation~\citep{fisher1937wave,kolmogorov1937study}, that is
$$\partial_t u = D_1 \partial_x^2 u + ru \left( 1- \frac{u}{\epsilon}\right)\ .$$
One can check that such a solution~$u$ is also a sub-solution for the $B$-equation~\eqref{B:pde} in our fire blight model. Due to well-known spreading properties to the minimal travelling wave for the Fisher equation~\citep{aronson1975}, it follows that our fire blight model cannot admit a travelling wave with speed strictly less than $c^*$.

Therefore, going forward we will only be concerned with the existence of travelling waves for any speed (strictly) larger than $c^* = 2 \sqrt{D_1 r}$.

\subsection{Existence of travelling waves}
Our main result below will be the proof of the existence of travelling wave solutions to (\ref{B:pde})-(\ref{I:ode}) with nonlinearities (\ref{ffun:eq})-(\ref{gfun:eq}). For this we use a fixed point argument, similar to those used e.g. in \citep{ma2001traveling,bhkr2005,huang2006travelling,shang2016traveling,shu2019traveling,zhou2019critical}. This method involves the construction of suitable upper and lower solutions, the construction of a continuous and compact mapping on a set defined by these upper and lower solutions, and the application of Schauder's fixed point theorem. Here we adapt this argument to the PDE-ODE coupled system (\ref{B:pde})-(\ref{I:ode}), which is an everywhere degenerate semilinear system of reaction-diffusion equations. We begin with establishing in Section \ref{UpLoBounds:sec} upper and lower solutions that are specific to the model at hand, and then define an integral map in Section \ref{integralmap:sec} on the set bounded by the upper and lower solutions, the fixed points of which satisfy the travelling wave equations. The main result will then be in Section \ref{twexistence:sec} an existence theorem for travelling wave solutions.

Throughout this analysis, we will consider
$$c > c^* = 2 \sqrt{D_1 r}\ .$$

\subsubsection{Upper and lower Solutions}\label{UpLoBounds:sec}

We need to find two quadruplets
$$0 \leq (\underline{B},\underline{O},\underline{S},\underline{I})   \leq (\overline{B},\overline{O},\overline{S},\overline{I}) \ ,$$
where the inequalities are to be understood componentwise, satisfying (possibly in a generalized sense)
\begin{eqnarray}
\mathcal{L}_B & := &  D_1 \underline{B}'' + c \underline{B}' +r \underline{B} \left( 1 -\frac{\underline{B}}{K(\underline{S}+\underline{I}) + \varepsilon} \right) + \mu \underline{O}\underline{S} \geq 0  \label{LB:ineq}\\
\mathcal{U}_B & := &  D_1 \overline{B}'' + c \overline{B}' +r \overline{B} \left( 1 -\frac{\overline{B}}{K(\overline{S}+\overline{I}) + \varepsilon} \right) + \mu \overline{O}\overline{S} \leq 0 \label{UB:ineq}\\
\mathcal{L}_O & := &  D_2 \underline{O}'' + c \underline{O}' + \alpha \underline{I} - \mu \underline{O} \overline{S} - \gamma \underline{O} \geq 0\label{LO:ineq}\\
\mathcal{U}_O & := &  D_2 \overline{O}'' + c \overline{O}' + \alpha \overline{I} - \mu \overline{O} \underline{S} - \gamma \overline{O} \leq 0 \label{UO:ineq}\\
\mathcal{L}_S & := &  c \underline{S}' - f ( \overline{B}) \underline{S} \geq 0 \label{LS:ineq}\\
\mathcal{U}_S & := & c \overline{S}'  - f (\underline{B}) \overline{S} \leq 0 \label{US:ineq}\\
\mathcal{L}_I & := &  c \underline{I} ' + f ( \underline{B}) \underline{S} - g (\underline{I}) \underline{I} \geq 0 \label{LI:ineq}\\
\mathcal{U}_I & := &  c \overline{I} ' + f (\overline{B}) \overline{S} - g(\overline{I}) \overline{I} \leq 0 \ . \label{UI:ineq}
\end{eqnarray}
By a generalized sense, we mean that some of our lower and upper solutions below will satisfy those inequalities except at a finite number of points where they will lack the necessary regularity. However, provided that their left and right derivatives at those points are correctly ordered, the comparison principle underlying the fixed point approach remains available (see the later subsections and Lemma~2.1 in~\citet{shu2019traveling}).

Recall that $c > c^* = 2 \sqrt{D_1 r}$. Then there exists~$\lambda$ the smallest positive solution of
$$\Lambda (\lambda) =D_1 \lambda^2 + r = c \lambda\ .$$
We first define the following candidate functions for the upper solutions.
$$\overline{S} = N \ , $$
$$\overline{B} =  \min \left\{ B_0,  e^{-\lambda z} \right\} \  ,$$
$$\overline{O} =  \min \left\{ \frac{\alpha I_0}{\gamma}, \eta e^{-2 \lambda z} \right\}\ ,$$
and $$\overline{I} =  \min \left\{ I_0 , \int_z^{\infty} \frac{ N f (\overline{B})}{c} \right\}\ ,$$
where $I_0$ is such that $g(I_0) I_0 = M_1 N$, and $B_0$ is the unique positive solution to
$$r B_0 \left(1 - \frac{B_0}{K(N+I_0) + \epsilon } \right) + \mu \frac{\alpha I_0}{\gamma} N = 0 \ .$$ The constant $\eta$ will be chosen later in order to ensure that the required inequality~(\ref{UO:ineq}) will indeed be satisfied.

For later use, we also compute that
\begin{equation}\label{overlI:eq}
\overline{I}  \leq   \int_z^{\infty} \frac{ M_1 N \overline{B}^2 }{A_1^2 c}  \leq   \frac{M_1 N}{2A_1^2 \lambda c} e^{-2 \lambda z}\ ,
\end{equation}
owing to the properties of the function~$f$ in~\eqref{ffun:eq}.

As candidate functions for lower solutions, we define
$$\underline{S} = N  e^{ - \int_{z}^{\infty} \frac{f(\overline{B})}{c}}\ ,$$
$$\underline{B} =  \max \{ 0,  e^{-\lambda z} - \nu e^{-(\lambda + \delta)z} \}\ ,$$
$$\underline{O}=  0\ ,$$
and
$$\underline{I} = 0\ .$$
Here the new parameters $\delta$ and $\nu$ in the expression for $\underline{B}$ are chosen to satisfy 
\begin{equation}\label{delta:eqi}
\delta \in (0, \lambda),   \quad  D_1 (\lambda+\delta)^2 - c (\lambda+\delta) +r < 0\ , 
\end{equation}
and
\begin{equation}\label{nu:eqi}
\nu >  \max  \left\{ 1 , \frac{r}{\varepsilon} \times \frac{1}{ c( \lambda+\delta) - r - D_1 (\lambda+\delta)^2} \right\} >0\ .
\end{equation}
Notice that $\underline{S}\leq \overline{S}$, $\underline{O} \leq \overline{O}$, $\underline{I} \leq \overline{I}$, and also $\underline{B}\leq \overline{B}$ up to increasing $\nu$ and without loss of generality.

Next we verify that these functions indeed satisfy the inequalities (\ref{LB:ineq})-(\ref{UI:ineq}) for a suitable choice of $\eta$ and under some additional condition on $\mu$ and $\alpha$ which will be specified below.

\paragraph{Computation of $\mathcal{U}_S$.} This is trivial due to $\overline{S} =N$ and then
$$\mathcal{U}_S = - N f (\underline{B} ) \leq 0\ .$$

\paragraph{Computation of $\mathcal{U}_I$.}
When $\overline{I} < I_0$, then we have $c \overline{I}' + N f( \overline{B}) = 0$ by definition, hence
$$ \mathcal{U}_I =  c \overline{I} ' + f(\overline{B}) \overline{S} - g( \overline{I})\overline{I} \leq 0\ .$$
On the other hand, $I_0$ satisfies
$$f(\overline{B}) \overline{S} - g (I_0)I_0 = N ( f(\overline{B}) - M_1) \leq 0\ .$$

\paragraph{Computation of $\mathcal{U}_O$.} When $\overline{O} = \frac{\alpha I_0}{\gamma}$, then
$$\mathcal{U}_0 = \alpha \overline{I} - \mu \frac{\alpha I_0}{\gamma} \underline{S} - \gamma \frac{\alpha I_0}{\gamma} \leq \alpha (\overline{I} - I_0 ) \leq 0\ .$$
When $\overline{O} < \frac{\alpha I_0}{\gamma}$, we compute that
\begin{eqnarray}
\mathcal{U}_O & \leq & \eta e^{-2 \lambda z} \times \left[ 4 D_2 \lambda^2 - 2 c \lambda - \gamma \right] + \alpha \overline{I}  \nonumber\\
& \leq & \eta e^{-2 \lambda z} \times \left[ 4 D_2 \lambda^2 - 2 c \lambda - \gamma \right] +  \frac{\alpha M_1 N}{2A_1^2 \lambda c} e^{-2 \lambda z}\  . \label{UOinequ:eq}
\end{eqnarray}
We recall that we yet have to choose the constant $\eta$. In order to achieve the desired inequality for $\mathcal{U}_O$ we propose
\begin{equation}\label{eta:eq}
\eta = \frac{\alpha M_1 N }{2A_1^2 \lambda c \times (\gamma + 2 c \lambda  - 4 D_2 \lambda^2)}\ .
\end{equation}
It is easy to verify that then $\eta$ is indeed positive: $\lambda$ is the smallest positive solution of $D_1 \lambda^2 +r = c \lambda$, hence $\lambda = \frac{c - \sqrt{c^2 - 4 D_1 r}}{2D_1} \in \left(0, \sqrt{\frac{r}{D_1}} \right)$, and we have  
\begin{eqnarray*}
4 D_2 \lambda^2 -2  c \lambda - \gamma  & = & (4D_2 - 2 D_1) \lambda^2 - 2r - \gamma\ .
\end{eqnarray*}
If $4D_2 - 2 D_1 \leq 0$, then the right-hand term is negative regardless of $\lambda$. If $4D_2 - 2 D_1 > 0$, then we have
\begin{equation}\label{etapos:eq}
4 D_2 \lambda^2 - 2 c \lambda - \gamma \leq (4D_2 - 2 D_1 ) \frac{r}{D_1} - 2r - \gamma < 0\ ,
\end{equation}
where the last inequality holds due to our model assumption (\ref{D1gtD2:eq}) that $D_2 \leq D_1$.

Substituting (\ref{eta:eq}) into (\ref{UOinequ:eq}) we obtain indeed $\mathcal{U}_O \leq 0$, as required.

\paragraph{Computation of $\mathcal{U}_B$.} The pathogen liberation rate $\mu$ must be suitably bounded in order for us to be able to obtain $\mathcal{U}_B \leq 0$. The key point here is to derive conditions such that the dynamics of the system is determined by the dynamics of the Fisher type equation for $B$, for example in the form 
\begin{eqnarray}
\mu \overline{O} \overline{S} & \leq & \mu  \eta N e^{-2 \lambda z} \nonumber \\
& \leq & \mu  \eta N \overline{B}^2 \nonumber\\
& \leq & \frac{r \overline{B}^2}{K(N +I_0) +\varepsilon}\ , \label{muO:ineq} 
\end{eqnarray}
where the first two inequalities hold by definition of~$\overline{O}$ and~$\overline{B}$, and the third one holds provided that
$$\mu \alpha < \frac{r}{K(N +I_0) +\varepsilon} \times \frac{2A_1^2 \lambda c (\gamma + 2 c\lambda - 4 D_2 \lambda^2)}{M_1 N }\ ,$$
due to our choice of~$\eta$ in~(\ref{eta:eq}).

That such a positive $\mu$ exists follows again from (\ref{etapos:eq}).
In order for this condition to not depend on $c$ and $\lambda$ but on the original model parameters only, we may instead require
\begin{equation}\label{main_mu_assumption}
\mu \alpha < \frac{4 A_1^2 r^2}{K(N+I_0)+\varepsilon} \times \frac{ 2r\cdot \min \{ 1 , 2 (1-\frac{D_2}{D_1} )\}+ \gamma }{M_1 N}\ ,
\end{equation}
where we recall that $I_0$ is the solution of $g(I_0) I_0 = M_1N$. We point out here that this condition on the pathogen liberation rate allows an ecological interpretation which we will discuss below. Although this constraint on $\mu\alpha$ likely might not be optimal, it provides a sufficient condition for our analysis going forward. 

Under this condition on $\mu$ we obtain~(\ref{muO:ineq}) and then
\begin{eqnarray*}
\mathcal{U}_B & \leq &  D_1 \overline{B}'' + c \overline{B}' +r \overline{B}   \\
& = &  e^{-\lambda z} (D_1 \lambda^2 - c \lambda + r) \\
& = & 0\ .
\end{eqnarray*}

\paragraph{Computation of $\mathcal{L}_S$.} Inequality $\mathcal{L}_S\geq 0$ is satisfied by definition.

\paragraph{Computation of $\mathcal{L}_B$.} The null function trivially satisfies $\mathcal{L}_B \geq 0$, so we only consider the case when $\underline{B} >0$. Due to $\nu > 1$ by~\eqref{nu:eqi}, this implies that $z > 0$. Then we can compute
\begin{eqnarray*}
\mathcal{L}_B & = & e^{-\lambda z} \times (D_1 \lambda^2 - c\lambda +r) - \nu e^{-(\lambda + \delta) z} \times (D_1 (\lambda+\delta)^2 - c(\lambda +\delta) +r) \\
		&&  - r \frac{(e^{-\lambda z} - \nu e^{-(\lambda+\delta)z})^2 }{K (\underline{S} + \underline{I}) +\varepsilon }\\
& \geq & -\nu e^{-(\lambda + \delta) z} \times (D_1 (\lambda+\delta)^2 - c(\lambda+\delta) +r) -   \frac{r e^{-2\lambda z} }{\varepsilon }\\
& \geq &  \nu e^{-(\lambda+\delta) z} \times \left(- D_1 (\lambda+\delta)^2 + c (\lambda+\delta) - r - \frac{r}{\nu \varepsilon} \right) \\
& \geq & 0\ ,
\end{eqnarray*}
where we used our choice of $\delta$ and $\nu$ from \eqref{delta:eqi}-\eqref{nu:eqi} to obtain the last inequality. We only briefly point out that we may have instead used the positivity of $\underline{S}$ on a right half line, to make the same argument work when $\varepsilon = 0$.

\paragraph{Computation of $\mathcal{L}_O$ and $\mathcal{L}_I$.} This is trivial for $\underline{O}=\underline{I} =0$.
\newline

With these upper and lower solutions for the travelling wave system in place, we can proceed to define the required integral map.

\subsubsection{Construction of an integral map whose fixed points are solutions to the travelling wave system}\label{integralmap:sec}

We now define the componentwise differential operator
\begin{equation*}
    \Delta = (\Delta_B, \Delta_O, \Delta_S, \Delta_I) \ ,
\end{equation*}
as
\begin{align*}
    \Delta_B (B)(z) :=& \ -D_1 B''(z) - c B'(z) + \alpha_B B(z), \  \\ 
    \Delta_O (O)(z) :=& \ -D_2 O''(z) - c O'(z) + \alpha_O O(z), \  \\ 
    \Delta_S (S)(z) :=& \  -c S'(z) + \alpha_S S(z), \  \\ 
    \Delta_I (I)(z) :=& \ - c I'(z) + \alpha_I I(z)\  .
\end{align*}
 The constants $\alpha_B,\alpha_O,\alpha_S$ and $\alpha_I$ will need to be taken sufficiently large and will be determined later. The characteristic roots of $\Delta_B$ and $\Delta_O$ are
\begin{align*}
    \lambda_B^\pm :=& \ \frac{-c \pm \sqrt{c^2 + 4D_1 \alpha_B}}{2D_1} \ ,\\ 
    \lambda_O^\pm :=& \ \frac{-c \pm \sqrt{c^2 + 4D_2 \alpha_O}}{2D_2} \ .
\end{align*}
We define the constants
\begin{align*}
\rho_B  :=& D_1(\lambda_B^+ - \lambda_B^-) = \sqrt{c^2+4D_1 \alpha_B}\ , \\
 \rho_O  :=& D_2(\lambda_O^+ - \lambda_O^-) = \sqrt{c^2+4D_2 \alpha_O} \ .
\end{align*}
For any positive choices of $\alpha_B, \alpha_O, \alpha_S$ and $\alpha_I$, there exists a positive value $\mu_0$ such that if 
\begin{equation*}
    \mu_0^+ = \mu_0\ , \quad\quad   \mu_0^- = -\mu_0\ ,
\end{equation*}
we have the inequalities
\begin{eqnarray*}
   \max \left(\lambda_B^-,\lambda_O^- ,\frac{-\alpha_S}{c},\frac{-\alpha_I}{c} \right) < &\mu^{-}_0& < 0   \ ,\ \  \\
    0 < &\mu^+_0& < \min \left(\lambda_B^+,\lambda_O^+ ,\frac{\alpha_S}{c},\frac{\alpha_I}{c} \right) \ .
\end{eqnarray*}
Then for functions $B(z),O(z),S(z)$ and $I(z)$ in the space 
\begin{equation*}
    C_{\mu^-_0,\mu^+_0}(\mathbb{R}) := \{ h(z) \in C(\mathbb{R} ) \ \ | \ \sup_{z \leq 0} |h(z)e^{-\mu^-_0 z}| +\sup_{z \geq 0} |h(z)e^{-\mu^+_0 z}| < \infty \} \ ,
\end{equation*}
the inverse operators for each component of  $\Delta$ have the following integral representation:
\begin{align*}
    \Delta_B ^{-1} (B) (z) :=& \frac{1}{\rho_B} \left (
     \int_{-\infty}^z e^{\lambda_B^- (z - y)}B(y)dy + \int_z^\infty e^{\lambda_B^+ (z - y)}B(y)dy
     \right ), \\
     \Delta_O^{-1} (O) (z) :=& \frac{1}{\rho_O} \left (
     \int_{-\infty}^z e^{\lambda_O^- (z - y)}O(y)dy + \int_z^\infty e^{\lambda_O^+ (z - y)}O(y)dy
     \right ),  \\
     \Delta_S^{-1}(S)(z) :=& \frac{1}{c} \left (
     \int_z^{\infty} e^{ \frac{\alpha_S}{c}(z - y)}S(y)dy \right ), \\
     \Delta_{I}^{-1}(I)(z) :=& \frac{1}{c} \left (
     \int_z^{\infty} e^{ \frac{\alpha_{I}}{c}(z - y)}I(y)dy \right  ) \ .
\end{align*} \medskip
To see this for $\Delta_S^{-1}$ and $\Delta_B^{-1}$, first take the derivatives of each operator with respect to $z$,
\begin{align*}
   (\Delta_S^{-1} S)'(z) =& -\frac{S(z)}{c} + \frac{\alpha_S}{c^2}  \int_z^{\infty}  e^{ \frac{\alpha_S}{c}(z - y)}  S(y)dy,  \\
   (\Delta_B^{-1} B)'(z) =& \frac{\lambda_B^-}{\rho_B} \left (
     \int_{-\infty}^z e^{\lambda_B^- (z - y)}B(y)dy \right ) +\frac{\lambda_B^+}{\rho_B} \left ( \int_z^\infty e^{\lambda_B^+ (z - y)}B(y)dy
     \right ), \\
      (\Delta_B^{-1} B)''(z) =& \frac{(\lambda_B^-)^2}{\rho_B} \left (
     \int_{-\infty}^z e^{\lambda_B^- (z - y)}B(y)dy \right ) +\frac{(\lambda_B^+)^2}{\rho_B} \left ( \int_z^\infty e^{\lambda_B^+ (z - y)}B(y)dy
     \right ) - \frac{B(z)}{D_1} \ .
\end{align*}
Then it follows that
\begin{align*}
\Delta_S (\Delta_S^{-1} S) (z) & = 
- c (\Delta_S^{-1} S)' (z) + \alpha_S (\Delta_S^{-1}) S(z)\\
&= c \left ( \frac{S(z)}{c} - \frac{\alpha_S }{c^2}  \int_z^{\infty}  e^{ \frac{\alpha_S}{c}(z - y)}  S(y)dy  \right )  \\
    & + \alpha_S \left (  \frac{1}{c}\int_z^{\infty} e^{\frac{\alpha_S}{c}(z-y)}S(y)dy     \right )\\
    &=S(z) \ .
\end{align*}
Similarly, one can show for $B$ that
\begin{eqnarray*}
\Delta_B (\Delta_B^{-1} B) (z) &= & -D_1(\Delta_B^{-1} B)'' (z) - c (\Delta_B^{-1} B)' (z)+ \alpha_B (\Delta_B^{-1} B) (z) \\
     &=& \frac{-D_1 (\lambda_B^-)^2 - c\lambda_B^- +\alpha_B}{\rho_B} \left (  \int_{-\infty}^z e^{\lambda_B^- (z - y)}B(y)dy  \right ) \\ && + \frac{-D_1 (\lambda_B^+)^2 - c\lambda_B^+ +\alpha_B}{\rho_B} \left (  \int_z^\infty e^{\lambda_B^+ (z - y)}B(y)dy  \right ) + B(z) \\
     &= &B(z) \ .
\end{eqnarray*}
Here the last line follows from $\lambda_B^{\pm}$ being the characteristic roots of $\Delta_B$. It is easy to see these relations hold for $\Delta_I^{-1}$ and $\Delta_O^{-1}$ as the calculations are identical to those of $\Delta_S^{-1}$ and $\Delta_B^{-1}$ .\bigskip

We now define the Banach space
\begin{equation*}
    \mathbb{B}_{\mu_0}(\mathbb{R},\mathbb{R}^4) :=  C_{\mu^-_0,\mu^+_0} (\mathbb{R}) \times  C_{\mu^-_0,\mu^+_0} (\mathbb{R}) \times  C_{\mu^-_0,\mu^+_0} (\mathbb{R}) \times  C_{\mu^-_0,\mu^+_0} (\mathbb{R})  \ ,
\end{equation*}
with the norm
\begin{equation*}
    \|u\|_{\mu_0} := \max_{1\leq i \leq 4} \sup_{z \in \mathbb{R}} \{e^{-{\mu_0} |z|}|u_i(z)| \} \ ,
\end{equation*}
where $u:=(u_1,u_2,u_3,u_4) \in \mathbb{B}_{\mu_0}(\mathbb{R},\mathbb{R}^4)$, and on this function space we define the integral mapping
\begin{equation*}
    F(u):= \begin{bmatrix} F_1(u) \\ F_2(u) \\ F_3(u) \\ F_4(u)  \end{bmatrix} = \begin{bmatrix} 
    \Delta_B^{-1} \left(\alpha_B u_1+ ru_1\left (1-\frac{u_1}{K(u_3+u_4) + \epsilon}\right ) + \mu u_2 u_3 \right) \\
    \Delta_O^{-1} (\alpha_O u_2 + \alpha u_4 -\mu u_2 u_3 - \gamma u_2 ) \\
    \Delta_S^{-1} (\alpha_S u_3 - f(u_1)u_3) \\
    \Delta_I^{-1} (\alpha_I u_4 + f(u_1)u_3 - g(u_4)u_4 )
    \end{bmatrix} \ .
\end{equation*}

We are in a position now to state the main result of this subsection.

\begin{lemma} \label{lemma:fixedpoint}
Assume the mapping $F(u)$ admits a fixed point, $F(u)=u$, for $u=(B,O,S,I) \in \mathbb{B}_{\mu_0}(\mathbb{R},\mathbb{R}^4)$. Then $(B,O,S,I)$ satisfy the travelling wave equations (\ref{B:tweq})-(\ref{I:tweq}).
\end{lemma}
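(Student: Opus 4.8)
The plan is to run the inverse-operator identities (22)--(25) in reverse. A fixed point $u=(B,O,S,I)$ of $F$ is, by definition, a quadruple each of whose entries equals $\Delta_\bullet^{-1}$ applied to the corresponding source term; applying the differential operator $\Delta_\bullet$ to each component should then collapse the fixed-point equation to the travelling wave system, once the auxiliary linear terms $\alpha_\bullet u_\bullet$ cancel from both sides.

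First I would record the regularity that comes for free. The integral representations of $\Delta_B^{-1}$ and $\Delta_O^{-1}$ show that $\Delta_B^{-1}(\phi),\Delta_O^{-1}(\phi)\in C^2(\mathbb{R})$ whenever $\phi\in C_{\mu^-_0,\mu^+_0}(\mathbb{R})$ is continuous — one differentiates under the integral sign, the exponential weights $e^{\lambda_B^\pm (z-y)}$, $e^{\lambda_O^\pm (z-y)}$ dominating the integrands for $\alpha_B,\alpha_O$ chosen large enough that $\mu_0<\min\{-\lambda_B^-,\lambda_B^+,-\lambda_O^-,\lambda_O^+\}$ — and likewise $\Delta_S^{-1}(\phi),\Delta_I^{-1}(\phi)\in C^1(\mathbb{R})$ via the computation of their first derivatives already performed in the excerpt. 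Hence from $u=F(u)$ one automatically gets $B,O\in C^2(\mathbb{R})$ and $S,I\in C^1(\mathbb{R})$, so every derivative in (18)--(21) is a classical derivative and the manipulations below are legitimate.

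Next I would apply the operators componentwise. From $B=F_1(u)$ and identity (22), $\Delta_B(B)=\alpha_B B+rB\bigl(1-\tfrac{B}{K(S+I)+\epsilon}\bigr)+\mu OS$; expanding $\Delta_B(B)=-D_1B''+cB'+\alpha_B B$ and cancelling $\alpha_B B$ yields exactly (18). The same move with $\Delta_O$ and (23) cancels $\alpha_O O$ and gives (19); with $\Delta_S$ and (24) it cancels $\alpha_S S$ and gives $cS'=-f(B)S$, which is (20); and with $\Delta_I$ and (25) it cancels $\alpha_I I$ and gives $cI'=f(B)S-g(I)I$, which is (21). Since $f(\cdot)$, $g(\cdot)$ are the Hill functions of (2)--(3), these are precisely the travelling wave equations, which proves the lemma.

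The one point needing genuine care — and where I would slow down — is the applicability of (22)--(25) to these particular source terms: those identities were derived for arguments in $C_{\mu^-_0,\mu^+_0}(\mathbb{R})$, whereas the sources here contain products such as $\mu u_2 u_3$ and rational and Hill expressions, and a product of two elements of a weighted exponential space can double the growth rate and need not remain in that space. I would dispose of this either by observing that for $F(u)$ to be defined at all — which is implicit in the hypothesis that $F$ admits a fixed point — the defining integrals must already converge, so the sources lie in the requisite space and the derivation of (22)--(25) applies verbatim; or, more robustly, by invoking that in the Schauder argument the fixed point is sought inside the bounded order interval between the upper and lower solutions, on which $u_1,\dots,u_4$ and hence all the nonlinear terms are uniformly bounded, making membership in $C_{\mu^-_0,\mu^+_0}(\mathbb{R})$ automatic.
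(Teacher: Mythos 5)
Your proposal is correct and follows essentially the same route as the paper: apply the differential operators $\Delta_B,\Delta_O,\Delta_S,\Delta_I$ to the fixed-point identities, invoke (22)--(25), and cancel the auxiliary terms $\alpha_\bullet u_\bullet$ to recover (18)--(21). Your added remarks on the regularity gained from the integral representations and on why the source terms remain in $C_{\mu_0^-,\mu_0^+}(\mathbb{R})$ are sensible extra care that the paper leaves implicit, but they do not change the argument.
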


\begin{proof}
If the mapping $F(u)$ achieves a fixed point $u=(B,O,S,I)$ such that $F(u)=u$, then we have that 
\begin{equation*}
    F_3(u)= \Delta_S^{-1} \left ( \alpha_S S - f(B)S \right)=   S \ .
\end{equation*}
By applying $\Delta_S$, we find
$$\Delta_S \Delta_S^{-1} (\alpha_S S - f(B) S) = \Delta_S S = - c S' + \alpha_S S \ ,$$
and then 
\begin{equation*}
   - cS' = -f(B)S\ .
\end{equation*}
 This is the original differential equation for $S$ as seen in \eqref{S:tweq}, proving that this fixed point is equivalent to a solution for the $S$-component. One can directly show that this also holds for the other components of the map $F(u)$.
\end{proof}

Finally, we define the set
\begin{equation*}
    \Gamma := \{ (B,O,S,I) \in \mathbb{B}_{\mu_0^-,\mu_0^+}(\mathbb{R},\mathbb{R}^4) \ | \ \underline{B} \leq B \leq \overline{B}, \  \underline{O} \leq O \leq \overline{O}, \  \underline{S} \leq S \leq \overline{S}, \  \underline{I} \leq I \leq \overline{I}\ \} \ .
\end{equation*}

\subsubsection{$F(u)$ is invariant, continuous, and compact on $\Gamma$}

We choose $\alpha_B$, $\alpha_O$, $\alpha_S$ and $\alpha_I$ large enough such that
\begin{align*}
 &\frac{\partial }{\partial B} \Big( \alpha_B B(z) + rB(z) \big(1-\frac{B(z)}{K(S(z)+I(z))+\epsilon} \big) + \mu \cdot O(z)\cdot S(z) \Bigr)&> 0 \ , \ \forall \ z \in \mathbb{R} \ ,\\
 &\frac{\partial }{\partial O} \Big( \alpha_O O(z)+ \alpha \cdot I(z) - \mu \cdot O(z)\cdot S(z) - \gamma O(z) \Bigr) &> 0 \ , \ \forall \ z \in \mathbb{R} \ ,  \\
 &\frac{\partial }{\partial S} \Big(\alpha_S S(z)-f(B(z))S(z)  \Bigr) &> 0 \ , \ \forall \ z \in \mathbb{R} \ ,  \\
 &\frac{\partial }{\partial I} \Big( \alpha_I I(z) +  f(B(z))S(z) - g(I(z))I(z)\Bigr) &> 0 \ , \ \forall \ z \in \mathbb{R} \ .
\end{align*}
We have then the following result.
\begin{lemma}\label{lemma:Invariant} For any $u = (B,O,S,I) \in \Gamma$ ,
\begin{align*}
    & \underline{B} \leq F_1(u) \leq \overline{B}, \\
    & \underline{O} \leq F_2(u) \leq \overline{O}, \\
    & \underline{S} \leq F_3(u) \leq \overline{S}, \\
    & \underline{I} \leq F_4(u) \leq \overline{I} \ .
\end{align*}
\end{lemma}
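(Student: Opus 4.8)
The strategy is the monotone-iteration scheme adapted to a non-cooperative system: realise $F$ as a composition of the positivity-preserving right inverses $\Delta_i^{-1}$ from Step~1 with explicit reaction terms, establish a definite one-sided (mixed quasimonotone) dependence of those reaction terms on $\Gamma$, and then pull the upper/lower solution inequalities of Lemma~3.6 back through $\Delta_i^{-1}$. For $u=(u_1,u_2,u_3,u_4)\in\Gamma$ write $F_i(u)=\Delta_i^{-1}(H_i(u))$ with
\begin{align*}
H_B(u)&=\alpha_B u_1 + r u_1\Bigl(1-\tfrac{u_1}{K(u_3+u_4)+\epsilon}\Bigr)+\mu u_2 u_3, &
H_O(u)&=\alpha_O u_2+\alpha u_4-\mu u_2 u_3-\gamma u_2,\\
H_S(u)&=\alpha_S u_3 - f(u_1)u_3, &
H_I(u)&=\alpha_I u_4 + f(u_1)u_3 - g(u_4)u_4.
\end{align*}
Since $u\in\Gamma$ is squeezed between the bounded functions of Lemma~3.6, each $H_i(u)$ is bounded, the integrals defining $\Delta_i^{-1}$ converge with their positive exponential kernels, and each $\Delta_i^{-1}$ is order preserving; only the two-sided orderings of the $F_i$ remain to be checked.

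First I would record the one-sided monotonicity of each $H_i$ on $\Gamma$, using $\underline B\le u_1\le\overline B=B_U\le(1+\tfrac{\mu N}{r})(2KN+\epsilon)$, $0\le u_2\le B_U$, $\underline S\le u_3\le N$ and $0\le u_4\le\overline I\le N$. Differentiation gives: $H_B$ nondecreasing in all four arguments, the only nontrivial slot being $\partial_{u_1}H_B=\alpha_B+r-\tfrac{2ru_1}{K(u_3+u_4)+\epsilon}\ge 0$, which holds on $\Gamma$ exactly by the hypothesis $\alpha_B>\tfrac{2r}{\epsilon}(1+\tfrac{\mu N}{r})[2KN+\epsilon]-r$; $H_O$ nondecreasing in $u_2$ (via $\partial_{u_2}H_O=\alpha_O-\mu u_3-\gamma\ge\alpha_O-\mu N-\gamma>0$ from $\alpha_O>\mu N+\gamma$) and in $u_4$, nonincreasing in $u_3$ ($\partial_{u_3}H_O=-\mu u_2\le 0$), independent of $u_1$; $H_S$ nondecreasing in $u_3$ ($\partial_{u_3}H_S=\alpha_S-f(u_1)\ge\alpha_S-M_1>0$), nonincreasing in $u_1$ ($\partial_{u_1}H_S=-f'(u_1)u_3\le 0$); $H_I$ nondecreasing in $u_1$ and $u_3$ (partials $f'(u_1)u_3,\,f(u_1)\ge 0$) and in $u_4$, the last because $\partial_{u_4}H_I=\alpha_I-\bigl(g(u_4)+u_4g'(u_4)\bigr)$ and $\alpha_I$ was chosen above $M_2\bigl(1+\tfrac{n_2 A_2^{n_2}N^{n_2}}{(A_2^{n_2}+N^{n_2})^2}\bigr)$, which dominates $\sup_{0\le u_4\le N}\bigl(g(u_4)+u_4g'(u_4)\bigr)$.

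With the sign pattern in hand, each ordering is a one-line pull-back of the corresponding relation in Lemma~3.6. For the upper bound on $F_1$: monotonicity gives $H_B(u)\le H_B(\overline B,\overline O,\overline S,\overline I)$, which is exactly the argument tuple appearing in $U_B$; rewriting $U_B\le 0$ as $\Delta_B(\overline B)\ge H_B(\overline B,\overline O,\overline S,\overline I)$ and applying the order-preserving $\Delta_B^{-1}$, with $\Delta_B^{-1}\Delta_B(\overline B)=\overline B$ (valid since $B_U\in C_{\mu_0^-,\mu_0^+}$), yields $F_1(u)=\Delta_B^{-1}(H_B(u))\le\overline B$; the inequality $F_1(u)\ge\underline B$ is symmetric from $L_B\ge 0$. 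For $F_2$: because $H_O$ is nondecreasing in $u_2,u_4$ and nonincreasing in $u_3$, one has $H_O(u)\le H_O(\,\cdot\,,\overline O,\underline S,\overline I)$, which is precisely the combination appearing in $U_O$ — note the $\underline S$ there — so the same pull-back gives $F_2(u)\le\overline O$, while $F_2(u)\ge 0=\underline O$ follows at once from $H_O(u)\ge u_2(\alpha_O-\mu N-\gamma)+\alpha u_4\ge 0$. The cases $F_3$ (whose $U_S,L_S$ carry $\underline B$ and $\overline B$ respectively in the $f$-factor, matching the fact that $H_S$ decreases in $u_1$) and $F_4$ are entirely analogous.

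The main obstacle is less any single estimate than the bookkeeping forced by the system being genuinely non-cooperative: $H_O$ decreases in $S$ and $H_S$ decreases in $B$, so ``insert the upper solution in every slot'' fails, and the argument closes only because Lemma~3.6 was set up with exactly the matching mixed arguments; checking that the tuples produced by the sign analysis coincide with those in $U_i$, $L_i$ is the crux. A secondary point needing care: $\overline I=\min\{\tfrac{\gamma}{\alpha}B_U,N\}$ is only Lipschitz and piecewise $C^1$, so $\Delta_I^{-1}\Delta_I(\overline I)=\overline I$ cannot be quoted verbatim; instead one uses that $\overline I$, as a pointwise minimum of two generalized supersolutions of the first-order operator $\Delta_I$, is itself a generalized supersolution, and recovers $\overline I\ge\Delta_I^{-1}(H_I(\overline B,\,\cdot\,,\overline S,\overline I))$ from the a.e.\ inequality $\Delta_I(\overline I)\ge H_I(\overline B,\,\cdot\,,\overline S,\overline I)$ by noting that $z\mapsto e^{(\alpha_I/c)z}\bigl(\overline I(z)-\Delta_I^{-1}(H_I(\overline B,\,\cdot\,,\overline S,\overline I))(z)\bigr)$ is absolutely continuous, nondecreasing, and vanishes as $z\to-\infty$; the $C^1$ function $\underline S$ and the $C^2$ waves $B_U,B_L$ raise no such issue. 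The one genuinely computational step is verifying that the stated lower bound on $\alpha_I$ really dominates $g(u_4)+u_4g'(u_4)$ on $[0,N]$, which reduces to an elementary analysis of $\phi(\xi)=\xi(1+n_2+\xi)/(1+\xi)^2$ with $\xi=(u_4/A_2)^{n_2}$.
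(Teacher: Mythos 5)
Your proposal is correct and follows essentially the same route as the paper: choose $\alpha_B,\alpha_O,\alpha_S,\alpha_I$ so that each reaction term plus the linear shift is monotone in its own variable (with the mixed quasimonotone dependence on the remaining slots matching the arguments of $U_i$ and $L_i$ in Lemma~3.6), then pull the upper/lower solution inequalities back through the order-preserving operators $\Delta_i^{-1}$. Your explicit handling of the non-cooperative slots and of the merely piecewise-$C^1$ supersolution $\overline{I}=\min\{\tfrac{\gamma}{\alpha}B_U,N\}$ supplies details the paper dispatches with ``easily proved in the same way,'' but the underlying argument is identical.
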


\begin{proof}
With the above choices of $\alpha_{B,O,S,I}$, 
in combination with the inequalities of the upper and lower solutions of Section~\ref{UpLoBounds:sec}, we find
\begin{align*}
    \Delta_B \underline{B}(z) &= -D_1 \underline{B}''(z) - c\underline{B}'(z) +\alpha_B \underline{B}(z) \\ 
    &\leq  r\underline{B}(z)\left(1-\frac{\underline{B}(z)}{K(\underline{S}(z)+\underline{I}(z))+\epsilon}\right) + \mu \cdot \underline{O}(z)\cdot \underline{S}(z)+ \alpha_B \underline{B}(z) \\
    &\leq rB(z)\left(1-\frac{B(z)}{K(\underline{S}(z)+\underline{I}(z))+\epsilon}\right) + \mu \cdot \underline{O}(z)\cdot \underline{S}(z)+ \alpha_B B(z) \\
    &\leq rB(z)\left(1-\frac{B(z)}{K(S(z)+I(z))+\epsilon}\right) + \mu \cdot O(z)\cdot S(z)+ \alpha_B B(z) \\
    &\leq r\overline{B}(z)\left(1-\frac{\overline{B}(z)}{K(S(z)+I(z))+\epsilon}\right) + \mu \cdot O(z)\cdot S(z)+ \alpha_B \overline{B}(z) \\
    &\leq r\overline{B}(z)\left(1-\frac{\overline{B}(z)}{K(\overline{S}(z)+\overline{I}(z))+\epsilon}\right) + \mu \cdot \overline{O}(z)\cdot \overline{S}(z)+ \alpha_B \overline{B}(z) \\
    &\leq -D_1 \overline{B}''(z) - c\overline{B}'(z) + \alpha_B \overline{B}(z) \\
    &= \Delta_B \overline{B}(z) \ .
\end{align*}
Then, by the continuity of the upper and lower solutions, and through the application of Lemma 2.1 in \citep{shu2019traveling}, we get, for all $z$,
\begin{align*}
    \underline{B} (z) & \leq \Delta_B^{-1}( \Delta_B \underline{B} )(z) \\ &\leq \Delta_B^{-1} \left( rB \Big(1-\frac{B}{K(S+I)+\epsilon}\Bigr) + \mu \cdot O\cdot S+ \alpha_B B  \right ) (z)\\  &= F_1(u) (z)\\ &\leq \Delta_B^{-1}( \Delta_B \overline{B} )(z) \\  & \leq  \overline{B} (z)\ .
\end{align*}
The inequalities for $F_2(u), F_3(u)$ and $F_4(u)$ are proved in precisely the same way.
\end{proof}

Up to this point, we have constructed a convex Banach space $\Gamma$, an integral map $F$, and have shown that for all $u \in \Gamma$, $F(u) \in \Gamma$. In order to apply Schauder's fixed point theorem, we need to prove that $F$ is continuous and compact on $\Gamma$. These properties are proved below, and rely heavily on the fact that each function in $\Gamma $ is bounded above by some positive constant.
\begin{lemma} \label{lemma: Cont}
The mapping $F$ is continuous under the norm $\|\ . \|_{\mu_0}$ in $\Gamma$.
\end{lemma}
\begin{proof}
We have for each $u=(u_1,u_2,u_3,u_4) \in \Gamma$ , $z \in \mathbb{R}$,
\begin{align*}
    0 \leq \underline{B} (z) \leq u_1(z) &\leq \overline{B}(z) \leq B_0 \ , \\
    0 = \underline{O} (z)\leq u_2(z) &\leq \overline{O}(z) \leq \frac{\alpha I_0}{\gamma} \ , \\
    0 \leq \underline{S} (z)\leq u_3(z) &\leq \overline{S}(z) =  N  \ , \\
     0 = \underline{I} (z)\leq u_4(z) &\leq \overline{I}(z) =  I_0 \ .
\end{align*}
We borrow the continuity proof of \citet{shu2019traveling} (Lemma 4.1) in order to prove continuity for the map components $F_1$ and $F_2$. Here, they make the assumption that the gradient of their reaction terms are uniformly bounded. Since any function in~$\Gamma$ is uniformly bounded and our reaction terms are~$C^1$, one may check that we too satisfy this condition. This allows us to claim that there exists a constant~$L$ and a function $g$ such that for any $u,v \in \Gamma$,

\begin{equation*}
    \left |F_1(u)(z) - F_1(v)(z) \right | e^{-\mu_0|z|} \leq L g(z) \|u-v\|_{\mu_0} \ \  ,
\end{equation*}
where 
\begin{equation*}
    g(z) = e^{-\mu_0 |z|} \left [\int_{-\infty}^z e^{\lambda_B^-(z-y)+\mu_0 |y|} dy  +  \int_z^\infty e^{\lambda_B^+(z-y)+\mu_0 |y|} dy\right ] \ .
\end{equation*}
Since $\lambda_B^- < -\mu_0 <\mu_0 < \lambda_B^+$, one can apply L'Hopital's rule to show that $g$ is uniformly bounded on $\mathbb{R}$. Then there exists a positive constant $C_1$ such that, for any $u,v\in \Gamma$,
\begin{equation*}
       \|F_1(u) -F_1(v) \|_{\mu_0} \leq C_1 \cdot \|u-v\|_{\mu_0} \ .
\end{equation*}
and in particular $F_1$ is continuous in $\Gamma$. The proof is identical for showing continuity of the component $F_2(u)$. \medskip

To see that the ODE components $F_3(u)$ and $F_4(u)$ are continuous, observe that for any $u:=(u_1,u_2,u_3,u_4),v:=(v_1,v_2,v_3,v_4) \in \Gamma $, we have 

\begin{equation*}
    |F_3(u)(z)-F_3(v)(z)| = \frac{1}{c}\left | \int_z^{\infty} e^{\frac{\alpha_S}{c}(z-y)} \left (\alpha_S u_3 -f(u_1)u_3 -\alpha_S v_3 +f(v_1)v_3 \right)dy \right | \ .
\end{equation*}
Here we can use the Lipschitz property of the Hill function $f$,
\begin{equation*}
    |f(B_1)-f(B_2)| \leq K_f |B_1 - B_2| \ ,
\end{equation*}
and, by adding and subtracting $f(v_1)u_3$ to the equation,
\begin{equation*}
    |F_3(u)(z)-F_3(v)(z)| = \frac{1}{c}\left | \int_z^{\infty} e^{\frac{\alpha_S}{c}(z-y)} \left (\alpha_S u_3 -f(u_1)u_3 +f(v_1)u_3 - f(v_1)u_3-\alpha_S v_3 +f(v_1)v_3 \right)dy \right | \ .
\end{equation*}
This allows us to re-arrange some of the terms and find
\begin{equation*}
    |F_3(u)(z)-F_3(v)(z)| \leq \frac{1}{c}\left | \int_z^{\infty} e^{\frac{\alpha_S}{c}(z-y)} \left (\alpha_S |u_3-v_3| + N K_f |u_1 - v_1| +M_1 |u_3-v_3| \right)dy \right | \ .
\end{equation*}
Multiplying both sides by $e^{-\mu_0|z|}$ gives us
\begin{align*}
    |F_3(u)(z)-F_3(v)(z)|e^{-\mu_0|z|} &\leq \frac{1}{c} \bigg | \int_z^{\infty} e^{\frac{\alpha_S}{c}(z-y)}  (\alpha_S |u_3-v_3|e^{-\mu_0|z|}  \\
    &+ N K_f |u_1 - v_1|e^{-\mu_0|z|} +M_1 |u_3-v_3|e^{-\mu_0|z|} )dy \bigg | \ .
\end{align*}
One may then see that
\begin{eqnarray*}
  &&  |F_3(u)(z)-F_3(v)(z)|e^{-\mu_0|z|} \\
  &\leq & \frac{1}{c} \left | \int_z^{\infty} e^{\frac{\alpha_S}{c}(z-y)} e^{\mu_0 (|y|-|z|)} \left( \alpha_S \|u-v\|_{\mu_0} + N K_f \|u-v\|_{\mu_0} +M_1 \|u-v\|_{\mu_0} \right)dy \right | \\
    &= & \frac{1}{c} \|u-v\|_{\mu_0} (\alpha_S +NK_f +M_1) \int_z^{\infty} e^{\frac{\alpha_S}{c}(z-y)} e^{\mu_0 (|y|-|z|)}  dy \\
    &= & C_3 \cdot \|u-v\|_{\mu_0} \ ,
\end{eqnarray*}
for some $C_3 >0$ and for all $z \in \mathbb{R}$.

It is straightforward to prove that, for $F_4$, we also reach a result of the form 
\begin{equation*}
    \sup_{z\in \mathbb{R} }\{|F_4(u)(z)-F_4(v)(z)|e^{-\mu_0|z|}\} \leq C_4 \cdot \|u-v\|_{\mu_0} \ ,
\end{equation*}
where $C_4$ is some positive constant. Then, we have that there exists a constant~$C$ such that 
\begin{equation*}
    \max_{1\leq i \leq 4} \sup_{z\in \mathbb{R} }\{|F_i(u)(z)-F_i(v)(z)|e^{-\mu_0|z|}\} = \|F(u)-F(v)\|_{\mu_0} \leq C\cdot \|u-v\|_{\mu_0}  \ ,
\end{equation*}
for all $u,v\in \Gamma$. This completes the proof of Lemma~\ref{lemma: Cont}. 
\end{proof}

\begin{lemma}\label{lemma: comp}
The mapping $F$ is compact under the norm $\|\ . \ \|_{\mu_0}$ in $\Gamma$.
\end{lemma}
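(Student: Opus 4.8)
The plan is to prove that $F(\Gamma)$ is precompact in $\big(\mathbb{B}_{\mu_0}(\mathbb{R},\mathbb{R}^4),\,|\cdot|_{\mu_0}\big)$, which is exactly the assertion that $F$ is compact on $\Gamma$. I would combine the Arzel\`a--Ascoli theorem on bounded intervals with a diagonal extraction and a tail estimate supplied by the exponential weight. Two uniform bounds serve as input. By Lemma~3.7, every component of $F(u)$ lies between the fixed lower and upper solutions of Lemma~3.6, and those are bounded; hence $|F_i(u)(z)|\le M$ for all $u\in\Gamma$, $z\in\mathbb{R}$, $i\in\{B,O,S,I\}$, with $M$ independent of $u$. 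Moreover, since every function in $\Gamma$ is nonnegative and bounded above by the same fixed constants, the four arguments of the inverse operators in the definition of $F$ --- namely $\alpha_B u_1 + ru_1\!\left(1-\frac{u_1}{K(u_3+u_4)+\epsilon}\right)+\mu u_2u_3$, $\ \alpha_O u_2+\alpha u_4-\mu u_2 u_3-\gamma u_2$, $\ \alpha_S u_3 - f(u_1)u_3$, and $\ \alpha_I u_4 + f(u_1)u_3 - g(u_4)u_4$ --- are bounded on $\Gamma$ by a common constant $G$, uniformly in $u$.

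The second step is to establish equicontinuity of $F(\Gamma)$ by bounding the first derivatives $(F_i(u))'$ uniformly in $u$. Differentiating the integral representations of $\Delta_B^{-1},\Delta_O^{-1},\Delta_S^{-1},\Delta_I^{-1}$ exactly as in Step~1, and using the elementary identities $\int_{-\infty}^{z}e^{\lambda_B^-(z-y)}\,dy = 1/|\lambda_B^-|$, $\ \int_z^{\infty}e^{\lambda_B^+(z-y)}\,dy = 1/\lambda_B^+$, their analogues for $\Delta_O^{-1}$, and $\int_{-\infty}^z e^{-\frac{\alpha_S}{c}(z-y)}\,dy = c/\alpha_S$ for the two ODE operators, the bound $G$ yields $|(F_1(u))'(z)|\le 2G/\rho_B$, $\ |(F_2(u))'(z)|\le 2G/\rho_O$, and $\ |(F_3(u))'(z)|,\,|(F_4(u))'(z)|\le 2G/c$, with constants independent of $u\in\Gamma$ and $z\in\mathbb{R}$ (the boundary terms carried by $\Delta_S^{-1}$ and $\Delta_I^{-1}$ vanish in the limit, just as in Step~1). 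Hence $F(\Gamma)$ is uniformly Lipschitz with a single common constant, so on each interval $[-n,n]$ it is bounded and equicontinuous, and Arzel\`a--Ascoli gives relative compactness of $F(\Gamma)$ restricted to $[-n,n]$ in $C([-n,n])$.

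Now take any sequence $\{F(u^{(k)})\}_k\subset F(\Gamma)$. By Arzel\`a--Ascoli applied successively on $[-1,1],[-2,2],\dots$ and a diagonal argument, there is a subsequence (not relabelled) and a continuous limit $w=(w_1,w_2,w_3,w_4)$ with $|w_i|\le M$ such that $F(u^{(k)})\to w$ uniformly on every compact subset of $\mathbb{R}$; the bound on $w$ places it in $\mathbb{B}_{\mu_0}(\mathbb{R},\mathbb{R}^4)$. To upgrade this to convergence in the norm $|\cdot|_{\mu_0}$, fix $\varepsilon>0$ and choose $R>0$ with $2Me^{-\mu_0 R}<\varepsilon$. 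For $|z|\ge R$, $e^{-\mu_0|z|}\,|F_i(u^{(k)})(z)-w_i(z)|\le 2Me^{-\mu_0 R}<\varepsilon$ for every $k$; for $|z|\le R$, uniform convergence gives $\sup_{|z|\le R}e^{-\mu_0|z|}|F_i(u^{(k)})(z)-w_i(z)|\le\sup_{|z|\le R}|F_i(u^{(k)})(z)-w_i(z)|<\varepsilon$ once $k$ is large. Therefore $|F(u^{(k)})-w|_{\mu_0}\to 0$, so $F(\Gamma)$ is precompact and $F$ is compact on $\Gamma$.

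I expect the only genuine work to be the four derivative estimates: routine, but slightly fiddly because $\Delta_S^{-1}$ and $\Delta_I^{-1}$ lack the second-order smoothing of $\Delta_B^{-1}$ and $\Delta_O^{-1}$, so their derivatives retain a direct term equal to the argument of the operator divided by $c$, alongside the decaying convolution. The conceptual crux --- the only place where the particular geometry of $\mathbb{B}_{\mu_0}$ enters --- is the observation that the weight $e^{-\mu_0|z|}$ converts the uniform-on-compacts convergence delivered by Arzel\`a--Ascoli into genuine $|\cdot|_{\mu_0}$-convergence, because the tails $|z|\ge R$ contribute at most $2Me^{-\mu_0 R}$ uniformly in $k$; once this is in place, the argument closes with no further obstruction.
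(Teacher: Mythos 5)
Your proposal is correct and follows essentially the same route as the paper: uniform boundedness of $F_i(u)$ from the invariance lemma, uniform derivative bounds from the differentiated integral representations to get equicontinuity, and then Arzel\`a--Ascoli with a diagonal extraction. The only difference is that you carry out explicitly the final step --- converting uniform-on-compacts convergence into $|\cdot|_{\mu_0}$-convergence via the tail estimate $2Me^{-\mu_0 R}$ --- which the paper defers to references [59] and [60].
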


\begin{proof}
To show compactness, we demonstrate that for any sequence of functions $\{ u^n \} \in \Gamma$, where $n \in \mathbb{N}$, the sequence $\{F(u^n)\}$ has a convergent sub-sequence in $\Gamma$ with respect to the norm $\|\ . \ \|_{\mu_0}$. To do this, we use the Arzela-Ascoli theorem.

By the invariance of $\Gamma$, the sequence $\{F(u^n)\}$ is uniformly bounded with respect to the $L^\infty$-norm. Observe also that for any $u:= \left ( B,O,S,I \right )\in \Gamma$,
\begin{eqnarray*}
    && | F_1(u)'(z) | \\
    &=& \bigg | \frac{\lambda_B^-}{\rho_B} \left (
     \int_{-\infty}^z e^{\lambda_B^- (z - y)}\left(\alpha_B B(y)+ rB(y)\left (1-\frac{B(y)}{K(S(y)+I(y)) + \epsilon}\right ) + \mu O(y) S(y) \right) dy \right ) \\
     &+& \frac{\lambda_B^+}{\rho_B} \left ( \int_z^\infty e^{\lambda_B^+ (z - y)}\left(\alpha_B B(y)+ rB(y)\left (1-\frac{B(y)}{K(S(y)+I(y)) + \epsilon}\right ) + \mu O(y) S(y) \right)dy
     \right )  \bigg | \ .
\end{eqnarray*}
Due again to the boundedness of $\Gamma$ with respect to the $L^\infty$-norm, one can find a constant $W_1\in \mathbb{R}^+$ such that for all $z \in \mathbb{R}$,
\begin{align*}
    | F_1(u)'(z) | &\leq W_1 \ .
\end{align*}
One may similarly check that $F_2 (u)'$, $F_3 (u)'$ and $F_4 (u)'$ are uniformly bounded.






Now we know the sequence $\{ F (u^n)\}$ is uniformly bounded and equi-continous. By the Arzela-Ascoli theorem and a standard diagonal process, there exists a subsequence which converges locally uniformly. One may check that the convergence also occurs with respect to the norm $\|\ . \ \|_{\mu_0}$. We defer the details of this proof to Lemma 3.5 in \citet{shu2019traveling} or the appendix of \citet{wang2016traveling}.  This completes the proof of Lemma \ref{lemma: comp}.
\end{proof}

We now have by Lemma \ref{lemma:Invariant}, Lemma \ref{lemma: Cont} and Lemma \ref{lemma: comp} that the mapping~$F$ is invariant, continuous and compact on $\Gamma$ with respect to the norm $\|\ . \ \|_{\mu_0}$. By Schauder's fixed point theorem, $F$ admits at least one fixed point~$(B,O,S,I)$ in~$\Gamma$. By Lemma~\ref{lemma:fixedpoint}, this fixed point is a solution to the travelling wave system~\eqref{B:tweq}-\eqref{I:tweq}.

\subsubsection{Travelling waves asymptotics}

This leads us to our final step, which is to show that our solution satisfies the desired asymptotic conditions.

As $z\longrightarrow +\infty$, applying the squeeze theorem immediately yields
\begin{align*}
    B(z) \longrightarrow 0 \ \ &\text{as} \ \ z \longrightarrow +\infty\ , \\
    O(z) \longrightarrow 0 \ \ &\text{as} \ \ z \longrightarrow +\infty\ , \\
    S(z) \longrightarrow N \ \ &\text{as} \ \ z \longrightarrow +\infty\ , \\ 
    I(z) \longrightarrow 0 \ \ &\text{as} \ \ z \longrightarrow +\infty \ .
\end{align*}

Let us now look at $z \to -\infty$. There the upper and lower solutions have differing asymptotics and more work is necessary.
\begin{lemma}\label{lem:Binfty}
Let $(B,O,S,I) \in \Gamma$ be a fixed point of $F$ as constructed in the previous subsection. Then
$$\liminf_{z \to -\infty} B(z) >0.$$
\end{lemma}
\begin{proof}
    First, due to $O,S,I \geq 0$, we have
    
    $$-D_1 B'' - c B' \geq r B \left(1 - \frac{B}{\epsilon} \right)  \ .$$
    
   By construction, we know that $B \geq \underline{B}$ with $\underline{B}$ non-negative and not identically equal to 0. It follows from the strong maximum principle that $B >0$ on the whole line.    Moreover, recall that 

    $$\underline{B} (z)  = \max \left\{ 0 , e^{-\lambda z}- \nu e^{-(\lambda + \delta z)} \right\} \ , $$
    which, by the same computation as in Section~\ref{UpLoBounds:sec}, satisfies
    $$- D_1 \underline{B} '' - c\underline{B} \leq r \underline{B} \left( 1 - \frac{\underline{B}}{\epsilon} \right) \ .$$
    The same inequality holds for any spatial shift of $\underline{B}$.

    Now let us proceed by contradiction and assume that 
    
    $$\liminf_{z \to -\infty} B (z) = 0 \ . $$
    
    Then there exists some critical shift 
    
    $$\kappa^* := \sup \, \{ \kappa  \geq 0 \ | \ \ B(\cdot) \geq \underline{B}(\cdot + \kappa ) \} \in [0, +\infty) \ . $$
    
    Due to $\underline{B}$ being decreasing on a right half-line, one may check that there exists some $z^*$ such that $\underline{B} (z^*+\kappa^*) = B (z^*)$. By the strong maximum principle, we conclude that $\underline{B}(\cdot+\kappa^*)\equiv B(\cdot)$, a contradiction. 
    \end{proof}
With this lemma in hand, by \eqref{S:tweq}, we get

\begin{equation*}
    - c S'(z) = -f (B (z)) S(z) \leq - \delta_1 S(z) \,
\end{equation*}
for all $z<0$ and some $\delta_1>0$. By Gronwall's lemma, it follows that
$$S(z) \leq S(0) e^{\frac{\delta}{c}z}\ \ \text{for all } z \leq 0 \ ,$$
hence $S(-\infty)=0$.

Now, for any $\delta_2 >0$, there exists $z_0 >0$ large enough so that
$$\sup_{z \leq -z_0} S(z) \leq \delta_2\ ,$$
and plugging this new information into by \eqref{I:tweq}, we get that
$$-c I'(z) \leq M_1 \delta_2 - g(I) I \ \ \text{for all } z \leq -z_0 \ .$$

It follows that 
$$\limsup_{z \to -\infty} I (z) \leq \beta \ ,$$
where $\beta$ is the unique positive solution to $g(\beta) \beta  = M_1 \delta_2$. Up to reducing $\delta_2$, this~$\beta$ can be made arbitrarily small. By non-negativity of~$I$, we deduce that $I(-\infty)=0$.

Next, up to increasing $z_0$, by \eqref{B:tweq} we have that
$$r B \left( 1 - \frac{B}{\epsilon} \right)  \leq -D_1 B'' (z) - c B' (z) \leq  r B \left( 1 - \frac{B}{2K\delta_2 + \epsilon} \right) + \mu \frac{\alpha I_0}{\gamma} \delta_2 \ ,$$
for $z \leq -z_0$. 

 Assume first that $B$ is monotonic in a neighborhood of $-\infty$. Then, due to $B$ being bounded and by standard regularity estimates, we find that $B' , B'' \to 0$ and $B \to B_\infty $ as $z \to +\infty$. Moreover $B_\infty >0$ by Lemma~\ref{lem:Binfty}. Passing to the limit in the above inequalities, we get
$$r  B_\infty  \left( 1 - \frac{B_\infty}{\epsilon} \right) \leq 0 \  \leq r B_\infty \left( 1 - \frac{B_\infty}{2K\delta_2 + \epsilon} \right) + \mu \frac{\alpha I_0}{\gamma} \delta_2 \ . $$
Since $\delta_2$ can be arbitrarily small, we conclude that $B_\infty = \lim_{z \to -\infty} B (z) = \epsilon $.

On the other hand, if $B$ is not monotonic, then there exist two sequences $z_n, z_n' \to -\infty$ such that
$B' (z_n) = B' (z_n ' ) = 0$, $B'' (z_n) \geq 0 \geq B'' (z_n ')$ and $\lim_n B(z_n) = \liminf_{z \to -\infty} B (z)$, $\lim_n B (z_n ') = \limsup_{z \to -\infty} B (z)$. Evaluating the above inequality at $z_n$ and passing to the limit, we find that
$$r \liminf_{z \to -\infty} B (z) \left(1 - \frac{\liminf_{z \to -\infty} B(z) }{\epsilon} \right)  \leq 0\ .$$
By Lemma~\ref{lem:Binfty}, necessarily $\liminf_{z \to -\infty} B (z) \geq \epsilon$. Evaluating at $z_n '$, one gets an opposite inequality and eventually reaches again the conclusion that $B(z) \to \epsilon$ as $z \to -\infty$.

The fact that $O(z) \to 0$ as $z \to -\infty$ follows a similar argument and we omit the details. This completes our proof of existence of travelling waves, which we recap in Theorem~\ref{Thm3} in the next subsection. 

\subsubsection{Existence theorem for travelling waves}\label{twexistence:sec}

We are now in a position to state our main theorem on the existence of travelling waves, which we have just proved in the previous subsections.
\begin{theorem}\label{Thm3}
Assume that
$$D_1 \geq D_2 \ , \quad n_1 = 2 \ . $$
Then:
\begin{enumerate}[label=(\roman*)]
\item there exists no travelling wave solution to system~\eqref{B:tweq}-\eqref{I:tweq} with asymptotic conditions~\eqref{asymp+}-\eqref{asymp-} for any $c < 2 \sqrt{D_1 r}$;
\item if furthermore \eqref{main_mu_assumption} holds, then for any $c > 2 \sqrt{D_1 r}$, there exists a travelling wave solution to system~\eqref{B:tweq}-\eqref{I:tweq} with asymptotic conditions~\eqref{asymp+}-\eqref{asymp-};
\end{enumerate}
\end{theorem}
As we will discuss in the next section, this theorem provides a sufficient condition for the existence of travelling waves. We conjecture it is not necessary. In particular, when condition~\eqref{main_mu_assumption} is not satisfied, we expect that the minimal wave speed may be strictly larger than $2 \sqrt{D_1 r}$.

We further point out that the upper and lower solutions used in our existence proofs also provide an invariant region for the full evolution system. Indeed, for initial conditions $u_0 :=\left(B_0 ,O_0,I_0,S_0 \right )\in \Gamma$ the associated solution~$u (x,t) := \left( B,O,S,I \right) (x,t)$ of \eqref{B:pde}-\eqref{I:ode} also satisfies that
$$x \mapsto u (x - ct,t) \in \Gamma \ , $$
for any $t>0$. For the sake of expediency, we omit the proof but it relies on a parabolic comparison principle closely connected to the invariance of $\Gamma$ through the mapping~$F$ (see also Lemma~\ref{lemma:Invariant} above). We refer to~\citet{bhkr2005,giletti2010} for similar arguments. In particular, it follows that for any such initial data, the solution of the Cauchy problem spreads with the speed~$c$ of the corresponding travelling wave.

\section{Discussion}

\subsection{Modelling fire blight}

Fire blight is a complex, sporadic and destructive bacterial disease. The severity of epidemics appears to be increasing \citep{van2012losses} and, because of their overuse, antibiotic-based control methods may not always be effective \citep{thomson1992presence}. A further understanding of how the fire blight pathogen, insect vectors and hosts interact to cause new infections is critical if we are to prevent future epidemics.

Despite the long history of fire blight as a plant disease, its routine destruction of orchards and the large amount of biological literature available to study fire blight, it remains a neglected topic in the mathematical modelling community. Modelling the spread of plant diseases can help uncover how certain ecological and epidemiological processes interact to induce or accelerate epidemics. They can also be used to predict the spread of disease given a state of initial conditions.

Previous disease models of fire blight spread \citep{chen2018sliding,iljon2012mathematical} focused on the time evolution of disease severity. These models took the form of ordinary differential equations and described the transition of a host  or vector from a susceptible state to a disease-carrying one. Here, we chose to consider the evolution of the disease over both time and space. We derived a system of ordinary differential equations, to describe the stationary host population, and coupled them to a system of partial differential equations, to describe the vectored pathogen population. This allows us to investigate how the disease spreads across an orchard. Indeed, in Theorem \ref{Thm3}, we proved that, depending on parameters, our fire blight model permits travelling waves, i.e. disease propagation at constant speed.

One aspect of fire blight epidemiology that was explicitly considered in our model but not in the previous work is the production and dissemination of ooze by infected hosts. The inclusion of this symptom led to a second partial differential equation in our model. Ooze is known to be a major source of the pathogen population for new infections, but since Hildebrand found that bees were not attracted to ooze  \citep{hildebrand1936honeybee}, there has not been a lot of research into the production and dissemination of ooze apart from the works of \citep{slack2017microbiological, boucher2019effects}. Further research into how ooze is produced and spread may be key in preventing future fire blight epidemics.  In this context we should re-iterate that a base assumption in our model simulations and analysis is that the managed pollinators, primarily honeybees, are more efficient in visiting flowers than other insects that are responsible the spread of ooze. Our travelling wave analysis made use of this assumption in several places. We remark here that this assumption can be somewhat relaxed but then additional mathematical arguments are needed, which we did not present here.

Another aspect of fire blight modelling that previous spatially homogeneous works have not had to consider is the question of boundary conditions. We assumed in our simulations and analysis of the model on a bounded domain (i.e. an orchard of finite size) that the in-flux and out-flux of bacteria due to pollinators could be considered negligible. However, for previously blight-free orchards, the pathogen must enter from an outside source for an epidemic to be possible. Robin type boundary conditions may prove to be the most accurate representation of reality for fire blight, where there is an initial pathogen population existing at the edge of an orchard that is transported into the orchard at some rate. More data on the spatial spread of fire blight is needed in order to determine which boundary conditions are most appropriate. 

\subsection{Ecological interpretation of constraint on the ooze conversion rate $\mu$}

The results of Theorem \ref{Thm3} offer some biological interpretations. When the diffusion of the floral bacteria (due to bees) is greater than the diffusion of the bacteria within ooze (due to non-bee insects), and provided that the ooze conversion is slow enough, then pathogen can invade the host population in the form of a travelling wave whose speed can be explicitely characterized as $2 \sqrt{D_1 r}$, i.e. a function of the pollinators' motion and pathogen's growth on flowers.

Our travelling wave proof hinges on the dynamics of the bacteria population~$B$ being the controlling influence, which is described by a Fisher-like equation, albeit with some important differences: most importantly the carrying capacity is not constant but depends on some of the state variables. A direct consequence of this is that the wave for $B$ is not monotonic but pulse like.

In order to achieve this dominance of the dynamics of $B$ it was important to bound the rate $\mu$ of conversion of ooze $O$ to free bacteria $B$, which is a source term in the equation for $B$. When~$\mu$ is too large, it remains an open question whether travelling waves still exist (though numerical simulations suggest that they do), but our Theorem~\ref{Thm3} states that any such travelling wave should still be faster than $2 \sqrt{D_1 r}$. In that sense, our existence result provides a sufficient condition for the speed of the fire blight disease to be equal to the smallest value possible, i.e. $2 \sqrt{D_1 r}$.

More precisely, the requirement that $\mu$ is small enough so that (\ref{main_mu_assumption}) holds, is achieved if 
\begin{itemize}
\item the bacterial growth rate $r$ is large enough
\item the loss rate of spore viability $\gamma$ is large enough
\item the spore production rate $\alpha$ is small enough
\item the infection rate $M_1$ is small 
\item the infection threshold $A_1$ is large
\item the carrying capacity for $B$, expressed in terms of $K$ and $\epsilon$ is small enough
\end{itemize}
or any appropriate combination of these factors.


For eaxample, such travelling waves are guaranteed to form if the disease infection process proceeds slowly,   and if the blossom associated bacteria reproduce quickly while having a low carrying capacity, i.e. if they are $r$-strategists, rather than $K$-strategists.


Accordingly, a remedial strategy may be to focus on removing ooze (for the minimal wave speed to equal $2\sqrt{D_1 r}$) and keeping the bacterial reproduction rate low (to decrease the speed $2 \sqrt{D_1} r$).

\subsection{Additional observation from numerical simulations, conjectures and open questions}

\begin{figure}
\includegraphics[width=\textwidth]{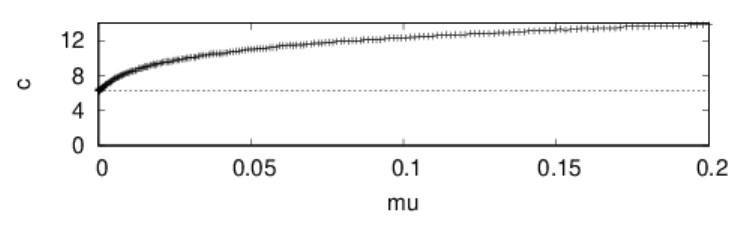}
\caption{Numerically observed wave speed $c$ for various values of spore liberation rate $\mu$. All other parameters were kept at their values from Table \ref{param_tab}. The value $2 \sqrt{D_1 r} =6.32$ of the minimal wave speed is indicated by the horizontal line. For these parameters Theorem~\ref{Thm3} guarantees travelling wave solutions for $\mu< 0.0004$.}\label{TWmu:fig}
\end{figure}

\begin{figure}
\begin{tabular}{cc}
\includegraphics[width=0.5\textwidth]{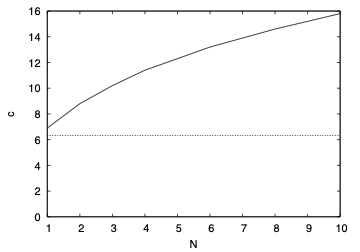}&
\includegraphics[width=0.5\textwidth]{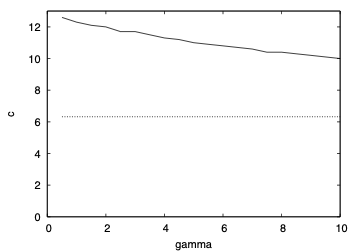}\\
(a) & (b)
\end{tabular}
\caption{Numerically observed wave speed $c$ for varying blossom density $N$ (left) and ooze removal rate $\gamma$ (right);
the minimal waves speed $2 \sqrt{D_1 r}=6.32$ is marked by the horizontal line;
in these simulations $\mu=0.1$, all other parameters are as in Table \ref{param_tab}.}\label{Ngamma:fig}
\end{figure}

Our main result in Theorem \ref{Thm3} is not optimal. It only gives sufficient conditions for a travelling wave to form. Numerical observations suggest that these are not necessary. This is for example also the case in the illustrative simulations that we showed above in Figure~\ref{illustTW}.
That the conditions are only sufficient stems partly from the fact that in the scenario that we studied the dynamics of the Fisher equation for $B$, albeit with density dependent carrying capacity, dominates the dynamics of the entire system.
We conjecture, therefore based on our numerical simulations,  that the travelling wave phenomenon occurs in a wider parameter range, and at wave speeds that can be substantially larger than $2 \sqrt{D_1 r}$, see for example, Figure \ref{TWmu:fig}.

As one might expect, higher spore liberation rates $\mu$ lead to higher flower associated pathogen densities, which in turn increases the number of infected hosts. Therefore, not only does the travelling wave move faster, but it has also more pronounced peaks in $B$ and $I$, if $\mu$ increases (data not shown).

From the perspective of pest management, one might expect that the density of hosts $N$ as well the spore decay rate $\gamma$ are key parameters.  We have investigated this in simple simulations which are reported in Figure \ref{Ngamma:fig}. We observe that increasing host density accelerates the spread of the disease across the orchard (see Fig. \ref{Ngamma:fig}.a).
This observation might in some sense confirm the results in \citep{iljon2012mathematical}. Whereas in that paper a spatially implicit model was studied that cannot predict the spatial spread rate, they showed that a greater number of hosts can result in an orchard that is more susceptible to fire blight infections. The notion that a greater host density increases the severity of disease spread seems consistent with other disease models and has some experimental evidence in ecology \citep{cunningham2021quantifying,lafferty2004fishing}. This finding would suggest that, as growers move towards higher-density plantings of susceptible cultivars, fire blight control strategies will need to be improved considerably. Otherwise, we may see more frequent and severe epidemics of the blight.

On the other hand, increasing $\gamma$ leads to slower wave speeds, see Fig. \ref{Ngamma:fig}.b. The parameter $\gamma$ accounts for the natural loss of pathogen viability, but would also account for other remedial strategies that are based on removing pathogens, for example removal of limbs, or bacteriocidal antibiotics that target the pathogens encased in ooze (bacteriocstatic antibiotics would be accounted for in the model by reducing parameters $\alpha$ and $r$).



These observations show that much more research on this model is required, both concerning its analysis, as well as numerical exploration of possible remedial strategies.

\subsection{Travelling waves in coupled ODE-PDE systems}

One prediction of our model is that the fire blight pathogen can invade an orchard in the form of a travelling wave. There are many reasons why it might be difficult to observe this phenomenon in nature. For one, travelling wave solutions are solutions on the unbounded domain, whereas realistic orchards are of finite size and the model will need to be subjected to boundary conditions that connect the domain with the outside world. Nevertheless, such travelling wave solutions offer insight into the spatio-temporal dynamics of a diffusion-reaction system, for example the spread of a disease and the time and length scale on which this happens.

Populations that invade new habitats often do so in the form of a travelling wave \citep{da1990spread, zadoks1994spread}, and experimental evidence exists for travelling waves in plant diseases \citep{buiel1989effect,minogue1983models,van1988focus}. If fire blight does in fact travel as a wave, then it may be possible to empirically determine a constant speed of spread and use this speed to predict where new infections are likely to occur.

Although ODE-PDE systems see less attention in the travelling wave literature than strictly parabolic systems, there are a number of approaches have been taken to prove that they admit travelling wave solutions. 

In \citet{li2012traveling}, the authors use a compact integral operator to prove that travelling wave solutions exist and they identify a minimum wave speed for the system. The approach taken here is very similar to theirs, except that their proof required the system to be cooperative and ours did not. However, their approach led to both sufficient and necessary conditions for which a travelling wave solution can exist, whereas our proof led only to sufficient conditions. 

Another approach to proving the existence of travelling waves in ODE-PDE systems is to use phase-plane techniques and linearize the system about its steady states to prove that one equilibrium is unstable while the other is stable. This is done, for example, in \citet{logan2008introduction}, where travelling wave solutions are shown to exist in an ODE-PDE system to model the spread of rabies in foxes. This same approach could not be applied here, as our linearization led to multiple zero eigenvalues giving inconclusive results. A similar approach in the higher dimensional case is to use shooting-methods as in \citet{dunbar1983travelling}. This is a difficult approach to take in high dimensional systems like ours, but it was successfully applied in \citet{ai2010traveling} to a model with the same dimension but milder nonlinearities in the reaction terms.  

To prove the existence of travelling wave solutions in our model, we used Schauder's fixed point theorem and the method of upper and lower solutions. The approach taken here is identical to the ones in \citet{ma2001traveling,huang2006travelling,shu2019traveling,shang2016traveling,zhou2019critical}: Define an integral mapping, show that a fixed point of this mapping is a travelling wave solution, show that the mapping does indeed admit a fixed point and that this solution has the desired asymptotic behaviour. The challenge of using this method here was that, unlike the examples listed above, some of our diffusion coefficients were zero. As a result, we had to define integral and differential operators for the ODE components of the system that we had not previously seen in the literature. We then showed that we could proceed with the travelling wave proof, as in the examples above, in precisely the same way but with these new operators. 

Our travelling wave proof required the construction of upper and lower solutions so that Schauder's fixed point theorem could be applied. Unfortunately, our choices of upper and lower solutions required a series of conditions on both, model parameters and the wave speed, to be met. Our simulation results suggest strongly that these restrictions are a product of our inability to find more suitable upper and lower solutions. It may be possible to find upper and lower solutions that do not impose these restrictions. Alternatively, one can try and apply the approach used in \citet{ai2010traveling} to remove the conditions, or, one can start with lower solutions that do not impose restrictions, take constant upper solutions and find a new way to prove that the asymptotic behaviour is satisfied.

\section{Conclusion}

In this work we formulated a model to describe how the fire blight pathogen, \textit{E.amylovora}, moved amongst a host population of blossoms during bloom. After listing the assumptions we made about the hosts, the pathogen and the vectors that disperse the pathogen, we formulated a model consisting of two reaction-diffusion partial differential equations and three ordinary differential equations. The two partial differential equations describe the population dynamics and the dispersal of the pathogen, whereas the ordinary differential equations describe the disease dynamics of our stationary host population. The model led to the following predictions and conclusions:

\begin{itemize}

    \item In our model, fire blight can move through an orchard in the form of a travelling wave, meaning that the pathogen population will invade the stationary host population with a constant speed and shape. The speed at which the pathogen invades the host population may be positively correlated with host density, the rate at which ooze is transferred onto healthy flowers and the average amount of ooze produced per infected individual. Where current fire blight control methods focus on the pathogen population existing on open blossoms, new methods that focus on controlling the production and dissemination of ooze may result in less fruit loss to fire blight. Further, the shift towards planting higher density orchards may result in mroe frequent and severe fire blight epidemics.

    \item  Travelling wave solutions can be shown to exist in high-dimensional non-linear ODE-PDE coupled systems through the application of Schauder's fixed point theorem.  We showed that by altering the differential operators and integral mapping used in \citep{ma2001traveling,wang2016traveling} to incorporate ordinary differential equations, one could use Schauder's fixed point theorem, together with the method of upper and lower solutions, to prove that travelling wave solutions exist in reaction-diffusion systems for which some of the diffusion coefficients are zero. Although we found this method useful, the method of upper and lower solutions introduced a series of restrictions on both model parameters and the wave speed that we believe to be sufficient and not necessary.

\end{itemize}

\subsection*{Acknowledgement}

M.P. was supported by the Ontario Ministry of Agriculture, Food and Rural Affairs (OMAFRA) and the Canada First Research Excellence Fund through an {\it Ontario Agrifood Innovation Alliance}/{\it Food from Thought} HQP Scholarship.

X.-S. W. is partially supported by the Louisiana Board of Regents Support Fund under contract No. LEQSF(2022-25)-RD-A-26.

H.J.E acknowledges financial support received from the Natural Sciences and Engineering Research Council of Canada's (NSERC) Discovery Grant program (grant no. RGPIN-2019-05003).

\bibliography{mybib}

\begin{thebibliography}{70}
\providecommand{\natexlab}[1]{#1}
\providecommand{\url}[1]{\texttt{#1}}
\expandafter\ifx\csname urlstyle\endcsname\relax
  \providecommand{\doi}[1]{doi: #1}\else
  \providecommand{\doi}{doi: \begingroup \urlstyle{rm}\Url}\fi

\bibitem[Ai(2010)]{ai2010traveling}
S.~Ai.
\newblock Traveling waves for a model of a fungal disease over a vineyard.
\newblock \emph{SIAM {J}ournal on {M}athematical {A}nalysis}, 42\penalty0 (2):\penalty0 833--856, 2010.

\bibitem[Andow' et~al.(1990)Andow', Kareiva, Levin, and Okubo]{da1990spread}
D.A. Andow', P.M. Kareiva, S.A. Levin, and A.~Okubo.
\newblock Spread of invading organisms.
\newblock \emph{Landscape {E}cology}, 4\penalty0 (2/3):\penalty0 177--188, 1990.

\bibitem[Aronson and Weinberger(1975)]{aronson1975}
D.~G. Aronson and H.~F. Weinberger.
\newblock Nonlinear diffusion in population genetics, combustion, and nerve pulse propagation.
\newblock {\it Partial {D}iffer. {Equat}. {R}elat. {Top}.}, {Tulane} {Univ}. 1974, {Lect}. {Notes} {Math}. 446, 5-49 (1975)., 1975.

\bibitem[Berestycki et~al.(2005)Berestycki, Hamel, Kiselev, and Ryzhik]{bhkr2005}
H.~Berestycki, F.~Hamel, A.~Kiselev, and Lenya Ryzhik.
\newblock Quenching and propagation in {KPP} reaction-diffusion equations with a heat loss.
\newblock \emph{Archive for {R}ational {M}echanics and {A}nalysis}, 178\penalty0 (1):\penalty0 57--80, 2005.
\newblock ISSN 0003-9527.
\newblock \doi{10.1007/s00205-005-0367-4}.

\bibitem[Billing(1979)]{billing1979warning}
E.~Billing.
\newblock Warning systems for fireblight 1.
\newblock \emph{EPPO Bulletin}, 9\penalty0 (1):\penalty0 45--51, 1979.

\bibitem[Boucher et~al.(2019)Boucher, Collins, Cox, and Loeb]{boucher2019effects}
M.~Boucher, R.~Collins, K.~Cox, and G.~Loeb.
\newblock Effects of exposure time and biological state on acquisition and accumulation of erwinia amylovora by drosophila melanogaster.
\newblock \emph{Applied and {E}nvironmental {M}icrobiology}, 85\penalty0 (15):\penalty0 e00726--19, 2019.

\bibitem[Britton(1991)]{Britton1991}
N.~F. Britton.
\newblock An integral for a reaction-diffusion system.
\newblock \emph{Appl. Math. Lett.}, 4\penalty0 (1):\penalty0 43--47, 1991.
\newblock ISSN 0893-9659.
\newblock \doi{10.1016/0893-9659(91)90120-K}.

\bibitem[Buiel et~al.(1989)Buiel, Verhaar, van~den Bosch, Hoogkamer, and Zadoks]{buiel1989effect}
A.A.M. Buiel, M.A. Verhaar, F.~van~den Bosch, W.~Hoogkamer, and J.C. Zadoks.
\newblock The effect of variety mixtures on the expansion velocity of yellow stripe rust foci in winter wheat.
\newblock \emph{Netherlands {J}ournal of {A}gricultural {S}cience}, 37:\penalty0 75--78, 1989.

\bibitem[Burrill(1880)]{burrill1880anthrax}
T.J. Burrill.
\newblock Anthrax of fruit trees: Or the so-called fire blight of pear, and twig blight of apple, trees.
\newblock \emph{Transactions of the {I}llinois {S}tate {H}orticultural {S}ociety}, pages 157--167, 1880.

\bibitem[Chen et~al.(2018)Chen, Kang, et~al.]{chen2018sliding}
C.~Chen, Y.~Kang, et~al.
\newblock Sliding motion and global dynamics of a {F}ilippov fire-blight model with economic thresholds.
\newblock \emph{Nonlinear Analysis: Real World Applications}, 39:\penalty0 492--519, 2018.

\bibitem[Cunningham et~al.(2021)Cunningham, Comte, McCallum, Hamilton, Hamede, Storfer, Hollings, Ruiz-Aravena, Kerlin, Brook, et~al.]{cunningham2021quantifying}
C.X. Cunningham, S.~Comte, H.~McCallum, D.G. Hamilton, R.~Hamede, A.~Storfer, T.~Hollings, M.~Ruiz-Aravena, D.H. Kerlin, B.W. Brook, et~al.
\newblock Quantifying 25 years of disease-caused declines in tasmanian devil populations: host density drives spatial pathogen spread.
\newblock \emph{Ecology {L}etters}, 24\penalty0 (5):\penalty0 958--969, 2021.

\bibitem[Denning(1794)]{denning1794decay}
W.~Denning.
\newblock On the decay of apple trees.
\newblock \emph{New York Society for the Promotion of Agricultural Arts and Manufacturers Transaction}, 2:\penalty0 219--222, 1794.

\bibitem[Diekmann(1978)]{diekmann1978thresholds}
O.~Diekmann.
\newblock Thresholds and travelling waves for the geographical spread of infection.
\newblock \emph{Journal of {M}athematical {B}iology}, 6\penalty0 (2):\penalty0 109--130, 1978.

\bibitem[Ducrot and Giletti(2014)]{DucrotGiletti}
A.~Ducrot and T.~Giletti.
\newblock Convergence to a pulsating travelling wave for an epidemic reaction-diffusion system with non-diffusive susceptible population.
\newblock \emph{Journal of {M}athematical {B}iology}, 69\penalty0 (3):\penalty0 533--552, 2014.
\newblock ISSN 0303-6812.
\newblock \doi{10.1007/s00285-013-0713-3}.

\bibitem[Dunbar(1983)]{dunbar1983travelling}
S.R. Dunbar.
\newblock Travelling wave solutions of diffusive lotka-volterra equations.
\newblock \emph{Journal of Mathematical Biology}, 17\penalty0 (1):\penalty0 11--32, 1983.

\bibitem[Eden-Green and Knee(1974)]{eden1974bacterial}
S.J. Eden-Green and M.~Knee.
\newblock Bacterial polysaccharide and sorbitol in fireblight exudate.
\newblock \emph{Microbiology}, 81\penalty0 (2):\penalty0 509--512, 1974.

\bibitem[El-Goorani and El-Kasheir(1989)]{el1989distribution}
M.A. El-Goorani and H.M. El-Kasheir.
\newblock Distribution of streptomycin resistant strains of {\it {e}rwinia amylovora} and control of fire blight disease in {E}gypt.
\newblock \emph{Journal of Phytopathology}, 124\penalty0 (2):\penalty0 137--142, 1989.

\bibitem[Fisher(1937)]{fisher1937wave}
R.A. Fisher.
\newblock The wave of advance of advantageous genes.
\newblock \emph{Annals of {E}ugenics}, 7\penalty0 (4):\penalty0 355--369, 1937.

\bibitem[Giletti(2010)]{giletti2010}
Thomas Giletti.
\newblock {KPP} reaction-diffusion system with a nonlinear loss inside a cylinder.
\newblock \emph{Nonlinearity}, 23\penalty0 (9):\penalty0 2307--2332, 2010.
\newblock ISSN 0951-7715.
\newblock \doi{10.1088/0951-7715/23/9/012}.

\bibitem[Heinrich(1979)]{heinrich1979resource}
B.~Heinrich.
\newblock Resource heterogeneity and patterns of movement in foraging bumblebees.
\newblock \emph{Oecologia}, 40\penalty0 (3):\penalty0 235--245, 1979.

\bibitem[Hildebrand(1936)]{hildebrand1936overwintering}
E.M. Hildebrand.
\newblock Overwintering of {\it {e}rwinia amylovora} in association with severe winter injury on baldwin apple trees.
\newblock \emph{Phytopathology}, 26:\penalty0 702--707, 1936.

\bibitem[Hildebrand(1939)]{hildebrand1939studies}
E.M. Hildebrand.
\newblock Studies on fire-blight ooze.
\newblock \emph{Phytopathology}, 29\penalty0 (2):\penalty0 142--156, 1939.

\bibitem[Hildebrand and Phillips(1936)]{hildebrand1936honeybee}
E.M. Hildebrand and E.F. Phillips.
\newblock The honeybee and the beehive in relation to fire blight.
\newblock \emph{Journal of {A}gricultural {R}esearch}, 52:\penalty0 789--810, 1936.

\bibitem[Hosono and Ilyas(1995)]{Hosono1995}
Yuzo Hosono and Bilal Ilyas.
\newblock Traveling waves for a simple diffusive epidemic model.
\newblock \emph{Math. Models Methods Appl. Sci.}, 5\penalty0 (7):\penalty0 935--966, 1995.
\newblock ISSN 0218-2025.
\newblock \doi{10.1142/S0218202595000504}.

\bibitem[Huang and Zou(2006)]{huang2006travelling}
J.~Huang and X.~Zou.
\newblock Travelling wave solutions in delayed reaction diffusion systems with partial monotonicity.
\newblock \emph{Acta Mathematicae Applicatae Sinica}, 22\penalty0 (2):\penalty0 243--256, 2006.

\bibitem[Iljon et~al.(2012)Iljon, Stirling, and Smith?]{iljon2012mathematical}
T.~Iljon, J.~Stirling, and R.J. Smith?
\newblock A mathematical model describing an outbreak of fire blight.
\newblock \emph{Understanding the Dynamics of Emerging and Re-Emerging Infectious Diseases Using Mathematical Models}, 2012.

\bibitem[Jones et~al.(2000)Jones, Schnabel, et~al.]{jones2000development}
A.L Jones, E.L. Schnabel, et~al.
\newblock The development of streptomycin-resistant strains of {\it {e}rwinia amylovora}.
\newblock \emph{Fire blight: the disease and its causative agent, {\it {E}rwinia amylovora}}, pages 235--251, 2000.

\bibitem[K{\"a}ll{\'e}n(1984)]{Kallen1984}
Anders K{\"a}ll{\'e}n.
\newblock Thresholds and travelling waves in an epidemic model for rabies.
\newblock \emph{Nonlinear Anal., Theory Methods Appl.}, 8:\penalty0 851--856, 1984.
\newblock ISSN 0362-546X.
\newblock \doi{10.1016/0362-546X(84)90107-X}.

\bibitem[Kermack and McKendrick(1927)]{kermack1927contribution}
W.O. Kermack and A.G. McKendrick.
\newblock A contribution to the mathematical theory of epidemics.
\newblock \emph{Proceedings of the Royal Society of London. Series A}, 115\penalty0 (772):\penalty0 700--721, 1927.

\bibitem[Kolmogorov(1937)]{kolmogorov1937study}
A.N. Kolmogorov.
\newblock A study of the equation of diffusion with increase in the quantity of matter, and its application to a biological problem.
\newblock \emph{Moscow University Bulletin of Mathematics}, 1:\penalty0 1--25, 1937.

\bibitem[Lafferty(2004)]{lafferty2004fishing}
K.D. Lafferty.
\newblock Fishing for lobsters indirectly increases epidemics in sea urchins.
\newblock \emph{Ecological {A}pplications}, 14\penalty0 (5):\penalty0 1566--1573, 2004.

\bibitem[Li(2012)]{li2012traveling}
B.~Li.
\newblock Traveling wave solutions in partially degenerate cooperative reaction--diffusion systems.
\newblock \emph{Journal of Differential Equations}, 252\penalty0 (9):\penalty0 4842--4861, 2012.

\bibitem[Logan(2001)]{Logan2001hydrogeo}
J.D. Logan.
\newblock \emph{Transport Modeling in Hydrogeochemical Systems}.
\newblock Springer, 2001.

\bibitem[Logan(2008)]{logan2008introduction}
J.D. Logan.
\newblock \emph{An {I}ntroduction to {N}onlinear {P}artial {D}ifferential {E}quations}.
\newblock John Wiley \& Sons, 2008.

\bibitem[Ma(2001)]{ma2001traveling}
S.~Ma.
\newblock Traveling wavefronts for delayed reaction-diffusion systems via a fixed point theorem.
\newblock \emph{Journal of Differential Equations}, 171\penalty0 (2):\penalty0 294--314, 2001.

\bibitem[Madden et~al.(2007)Madden, Hughes, and Van Den~Bosch]{madden2007study}
L.V. Madden, G.~Hughes, and F.~Van Den~Bosch.
\newblock \emph{The {S}tudy of {P}lant {D}isease {E}pidemics}.
\newblock American Phytopathological Society, 2007.

\bibitem[Miller(1929)]{miller1929studies}
P.W. Miller.
\newblock Studies of fire blight of apple in wisconsin.
\newblock \emph{Journal of {A}gricultural {R}esearch}, 39:\penalty0 579--621, 1929.

\bibitem[Mills(1955)]{mills1955fire}
W.D. Mills.
\newblock Fire blight development on apple in {W}estern {N}ew {Y}ork.
\newblock \emph{Plant Disease Reporter}, 39\penalty0 (3):\penalty0 206--207, 1955.

\bibitem[Minogue and Fry(1983)]{minogue1983models}
KP~Minogue and WE~Fry.
\newblock Models for the spread of plant disease: Some experimental results.
\newblock \emph{Phytopathology}, 73\penalty0 (8):\penalty0 1173--1176, 1983.

\bibitem[Okubo and Levin(2001)]{okubo2001diffusion}
A.~Okubo and S.A. Levin.
\newblock \emph{Diffusion and {E}cological {P}roblems: {M}odern {P}erspectives}, volume~14.
\newblock Springer, 2001.

\bibitem[{R Core Team}(2021)]{Rcite}
{R Core Team}.
\newblock \emph{R: A Language and Environment for Statistical Computing}.
\newblock R Foundation for Statistical Computing, Vienna, Austria, 2021.
\newblock URL \url{https://www.R-project.org/}.

\bibitem[Rosen(1929)]{rosen1929life}
H.R. Rosen.
\newblock Life history of the fire blight pathogen, {\it {b}acillus amylovorus}, as related to the means of over wintering and dissemination.
\newblock \emph{Bulletin of the {A}rkansas, {A}gricultural {E}xperiment {S}tation}, 244, 1929.

\bibitem[Rosen(1933)]{rosen1933further}
H.R. Rosen.
\newblock Further studies on the overwintering and dissemination of the fire-blight pathogen.
\newblock \emph{Arkansas {A}gricultural {E}xperiment {S}tation {B}ulletin}, 283, 1933.

\bibitem[Rosen(1936)]{rosen1936mode}
H.R. Rosen.
\newblock Mode of penetration and of progressive invasion of fire-blight bacteria into apple and pear blossoms.
\newblock \emph{Bulletin of the {A}rkansas {A}gricultural {E}xperiment {S}tation}, 331, 1936.

\bibitem[Sanchirico and Wilen(2005)]{sanchirico2005optimal}
J.N. Sanchirico and J.E. Wilen.
\newblock Optimal spatial management of renewable resources: matching policy scope to ecosystem scale.
\newblock \emph{Journal of Environmental Economics and Management}, 50\penalty0 (1):\penalty0 23--46, 2005.

\bibitem[Shang et~al.(2016)Shang, Du, and Lin]{shang2016traveling}
X.~Shang, Z.~Du, and X.~Lin.
\newblock Traveling wave solutions of n-dimensional delayed reaction--diffusion systems and application to four-dimensional predator--prey systems.
\newblock \emph{Mathematical Methods in the Applied Sciences}, 39\penalty0 (6):\penalty0 1607--1620, 2016.

\bibitem[Shu et~al.(2019)Shu, Pan, Wang, and Wu]{shu2019traveling}
H.~Shu, X.~Pan, X.~Wang, and J.~Wu.
\newblock Traveling waves in epidemic models: non-monotone diffusive systems with non-monotone incidence rates.
\newblock \emph{Journal of Dynamics and Differential Equations}, 31\penalty0 (2):\penalty0 883--901, 2019.

\bibitem[Slack et~al.(2022)Slack, Schachterle, Sweeney, Kharadi, Peng, Botti-Marino, Bardaji, Pochubay, and Sundin]{slack2022orchard}
S.~Slack, J.~Schachterle, E.~Sweeney, R.~Kharadi, J.~Peng, M.~Botti-Marino, L.~Bardaji, E.~Pochubay, and G.W. Sundin.
\newblock In-orchard population dynamics of erwinia amylovora on apple flower stigmas.
\newblock \emph{Phytopathology}, 112\penalty0 (6):\penalty0 1214--1225, 2022.

\bibitem[Slack and Sundin(2017)]{slack2017news}
S.M. Slack and G.W. Sundin.
\newblock News on ooze, the fire blight spreader.
\newblock \emph{Fruit Quarterly}, 25\penalty0 (1):\penalty0 9--12, 2017.

\bibitem[Slack et~al.(2017)Slack, Zeng, Outwater, and Sundin]{slack2017microbiological}
S.M. Slack, Q.~Zeng, C.A. Outwater, and G.W. Sundin.
\newblock Microbiological examination of {\it {e}rwinia amylovora} exopolysaccharide ooze.
\newblock \emph{Phytopathology}, 107\penalty0 (4):\penalty0 403--411, 2017.

\bibitem[Smith(1992)]{smith1992predictive}
T.J. Smith.
\newblock A predictive model for forecasting fire blight of pear and apple in washington state.
\newblock In \emph{VI International Workshop on Fire Blight 338}, pages 153--160, 1992.

\bibitem[Smoller(2012)]{smoller2012shock}
J.~Smoller.
\newblock \emph{Shock waves and reaction—diffusion equations}, volume 258.
\newblock Springer Science \& Business Media, 2012.

\bibitem[Soetaert et~al.(2010)Soetaert, Petzoldt, and Setzer]{soetaert2010solving}
K.~Soetaert, T.~Petzoldt, and R.W. Setzer.
\newblock Solving differential equations in r: package desolve.
\newblock \emph{Journal of {S}tatistical {S}oftware}, 33:\penalty0 1--25, 2010.

\bibitem[Steiner(1989)]{steiner1989predicting}
P.W. Steiner.
\newblock Predicting apple blossom infections by {\it {}rwinia amylovora} using the maryblyt model.
\newblock In \emph{V International Workshop on Fire Blight 273}, pages 139--148, 1989.

\bibitem[Thomson and Gouk(2003)]{thomson2003influence}
S.V. Thomson and S.C. Gouk.
\newblock Influence of age of apple flowers on growth of {\it {e}rwinia amylovora} and biological control agents.
\newblock \emph{Plant Disease}, 87\penalty0 (5):\penalty0 502--509, 2003.

\bibitem[Thomson et~al.(1992)Thomson, Gouk, Vanneste, Hale, and Clark]{thomson1992presence}
S.V. Thomson, S.C. Gouk, J.L. Vanneste, C.N. Hale, and R.G. Clark.
\newblock The presence of streptomycin resistant strains of erwinia amylovora in new zealand.
\newblock In \emph{VI International Workshop on Fire Blight 338}, pages 223--230, 1992.

\bibitem[Thomson et~al.(1986)]{thomson1986role}
S.V. Thomson et~al.
\newblock The role of the stigma in fire blight infections.
\newblock \emph{Phytopathology}, 76\penalty0 (5):\penalty0 476--482, 1986.

\bibitem[Van~den Bosch et~al.(1977)Van~den Bosch, Zadoks, and Metz]{van1977focus}
F~Van~den Bosch, JC~Zadoks, and JAJ Metz.
\newblock Focus expansion in plant disease. i: The constant rate of focus expansion.
\newblock \emph{Annuals of the {N}ew {Y}ork {A}cademy of {S}cience}, 287\penalty0 (171), 1977.

\bibitem[Van~den Bosch et~al.(1988)Van~den Bosch, Frinking, Metz, and Zadoks]{van1988focus}
F.~Van~den Bosch, H.D. Frinking, J.A.J. Metz, and J.C. Zadoks.
\newblock Focus expansion in plant disease. iii: Two experimental examples.
\newblock \emph{Phytopathology}, 78\penalty0 (7):\penalty0 919--925, 1988.

\bibitem[Van~der Zwet et~al.(2012)Van~der Zwet, Orolaza-Halbrendt, and Zeller]{van2012losses}
T.~Van~der Zwet, N.~Orolaza-Halbrendt, and W.~Zeller.
\newblock \emph{Fire {B}light: {H}istory, {B}iology,and {M}anagement}.
\newblock American Phytopathological Society, 2012.

\bibitem[Van~Laere et~al.(1980)Van~Laere, De~Greef, and De~Wael]{van1980influence}
O.~Van~Laere, M.~De~Greef, and L.~De~Wael.
\newblock Influence of the honeybee on fireblight transmission.
\newblock In \emph{ISHS {A}cta {H}orticulturae 117: {II} {S}ymposium on {F}ire {B}light}, pages 131--144, 1980.

\bibitem[Viswanathan et~al.(1999)Viswanathan, Buldyrev, Havlin, Da~Luz, Raposo, and Stanley]{viswanathan1999optimizing}
G.M. Viswanathan, S.V. Buldyrev, S.~Havlin, M.G.E. Da~Luz, E.P. Raposo, and H.E. Stanley.
\newblock Optimizing the success of random searches.
\newblock \emph{Nature}, 401\penalty0 (6756):\penalty0 911--914, 1999.

\bibitem[Volpert et~al.(1994)Volpert, Volpert, and Volpert]{volpert1994traveling}
A.I. Volpert, V.A. Volpert, and V.A. Volpert.
\newblock \emph{Traveling wave solutions of parabolic systems}, volume 140.
\newblock American Mathematical Soc., 1994.

\bibitem[Waite(1891)]{waite1891results}
M.B. Waite.
\newblock Results from recent investigations in pear blight.
\newblock \emph{Bot. Gaz}, 16:\penalty0 259, 1891.

\bibitem[Wang and Wang(2016)]{wang2016traveling}
H.~Wang and X.~Wang.
\newblock Traveling wave phenomena in a {K}ermack--{M}c{K}endrick {SIR} model.
\newblock \emph{Journal of {D}ynamics and {D}ifferential {E}quations}, 28\penalty0 (1):\penalty0 143--166, 2016.

\bibitem[Wang et~al.(2012)Wang, Wang, and Wu]{wang2012traveling}
X.~Wang, H.~Wang, and J.~Wu.
\newblock Traveling waves of diffusive predator-prey systems: disease outbreak propagation.
\newblock \emph{Discrete \& Continuous Dynamical Systems}, 32\penalty0 (9):\penalty0 3303, 2012.

\bibitem[Wilson et~al.(1989)Wilson, Sigee, and Epton]{wilson1989erwinia}
M.~Wilson, D.C. Sigee, and H.A.S. Epton.
\newblock {\it Erwinia amylovora} infection of {H}awthorn blossom: {I}. the anther.
\newblock \emph{Journal of {P}hytopathology}, 127\penalty0 (1):\penalty0 1--14, 1989.

\bibitem[Wilson et~al.(1990)Wilson, Sigee, and Epton]{wilson1990erwinia}
M.~Wilson, D.C. Sigee, and H.A.S. Epton.
\newblock {\it Erwinia amylovra} infection of {H}awthorn blossom: {III}. the nectary.
\newblock \emph{Journal of {P}hytopathology}, 128\penalty0 (1):\penalty0 62--74, 1990.

\bibitem[Zadoks and Van~den Bosch(1994)]{zadoks1994spread}
J.C. Zadoks and F.~Van~den Bosch.
\newblock On the spread of plant disease: a theory on foci.
\newblock \emph{Annual {R}eview of {P}hytopathology}, 32\penalty0 (1):\penalty0 503--521, 1994.

\bibitem[Zhou et~al.(2019)Zhou, Song, Wei, and Xu]{zhou2019critical}
J.~Zhou, L.~Song, J.~Wei, and H.~Xu.
\newblock Critical traveling waves in a diffusive disease model.
\newblock \emph{Journal of Mathematical Analysis and Applications}, 476\penalty0 (2):\penalty0 522--538, 2019.

\end{thebibliography}
\bibliographystyle{plainnat}
\end{document}